\newtheorem{teo}{Theorem}[subsection]
\newtheorem{lem}[teo]{Lemma}
\newtheorem{prop}[teo]{Proposition}
\newtheorem{cor}[teo]{Corollary}
\newtheorem{dfn}[teo]{Definition}
\newtheorem{ques}[teo]{Question}
\newtheorem*{corn}{Corollary}
\newtheorem*{tea}{Theorem}
\declaretheoremstyle[
  spaceabove=\topsep, spacebelow=\topsep,
  headfont=\bf,  
  notefont=\mdseries, notebraces={(}{)},
  bodyfont=\rmfamily, 
  postheadspace=1em,
  qed=$\Diamond$
]{drem}
\declaretheorem[style=drem, name=Remark, numberlike=teo]{rmk}
\declaretheorem[style=drem, name=Example, numberlike=teo]{exa}
\newcommand{\eg}[0]{\emph{e.g.} }
\newcommand{\ie}[0]{\emph{i.e.} }
\newcommand{\ssl}[1]{\underline{#1}}
\newcommand{\srl}[1]{\overline{#1}}
\newcommand{\jo}[1]{\mathcal{#1}}
\newcommand{\id}[1]{\mathfrak{#1}}
\newcommand{\pgen}[1]{\langle #1 \rangle}
\newcommand{\imp}[0]{\Rightarrow}
\newcommand{\eps}[0]{\varepsilon}
\newcommand{\kk}[0]{\ensuremath{\mathbb{K}}}
\newcommand{\rr}[0]{\ensuremath{\mathbb{R}}}
\newcommand{\zz}[0]{\ensuremath{\mathbb{Z}}}
\newcommand{\nn}[0]{\ensuremath{\mathbb{N}}}
\newcommand{\del}[0]{\ensuremath{\partial}}
\newcommand{\dd}[0]{\ensuremath{\mathrm{d}\!}}
\newcommand{\Id}[0]{\mathrm{Id}}
\newcommand{\inj}[0]{\hookrightarrow}
\newcommand{\limm}[1]{\textrm{\raisebox{.5ex}{\mbox{$\underset{#1}{\lim}$}}} \:}
\newcommand{\supp}[1]{\textrm{\raisebox{.5ex}{\mbox{$\underset{#1}{\sup}$}}} \:}
\newcommand{\inff}[1]{\textrm{\raisebox{.5ex}{\mbox{$\underset{#1}{\inf}$}}} \:}
\newcommand{\vide}[0]{\varnothing}
\newcommand{\comp}[0]{\mathsf{c}}
\newcommand{\eqtag}[0]{\addtocounter{teo}{1} \tag{\theteo}}
\newcommand{\cay}[0]{\mathrm{Cay}\!}
\newcommand{\IS}[0]{\mathrm{I}\mathrm{S}}
\newcommand{\FP}[0]{\mathrm{F}\!\mathrm{P}\!}
\newcommand{\D}[0]{\mathsf{D}}
\newcommand{\LL}[0]{\mathsf{L}}
\newcommand{\maza}[0]{\mu}
\newcommand{\Gver}[0]{X}
\newcommand{\Grou}[0]{G}
\newcommand{\tvantran}[0]{\cite[Theorem 1.3]{moi-trans}}
\newcommand{\tcentrinf}[0]{\cite[Theorem 3.2]{moi-trans}}
\newcommand{\EN}[0]{\text{\upshape \LARGE \suetterlin e}}
\newcommand{\ds}[1]{%
  \ifmmode
    \mathchoice{\text{\upshape \LARGE\suetterlin #1}\,}{\text{\upshape\LARGE\suetterlin #1}\,}{\text{\upshape\large\suetterlin #1}\,}{\text{\upshape \scriptsize\suetterlin #1}\,}%
  \else
    {\suetterlin #1}
  \fi
}
\DeclareSymbolFont{deusch}{T1}{suetterl}{m}{n}
\DeclareSymbolFontAlphabet{\mathds}{deusch}
\DeclareFontFamily{T1}{mafra}{}
\DeclareFontShape{T1}{mafra}{m}{n}{<->s*[0.95]yswab}{}
\DeclareFontShape{T1}{mafra}{m}{it}{<->s*[1.0]ygoth}{}
\DeclareTextFontCommand{\textgoth}{\yfrak}
\DeclareSymbolFont{mafrak}{T1}{mafra}{m}{n}
\DeclareSymbolFontAlphabet{\mathfr}{mafrak}
\renewcommand{\id}[1]{\mathfr{#1}}
\begin{document}
\centerline{\Large Boundary values, random walks and $\ell^p$-cohomology in degree one\footnote{MSC: Primary 20J06; Secondary: 05C81, 31C05, 60J45, 60J50}}

\vspace*{1cm}

\centerline{\large Antoine Gournay\footnote{Université de Neuchâtel, Rue É.-Argand 11, 2000 Neuchâtel, Suisse.}} 

\vspace*{1cm}

\centerline{\textsc{Abstract}}

\begin{center}
\parbox{10cm}{{ \small 
\hspace*{.1ex} The vanishing of reduced $\ell^2$-cohomology for amenable groups can be traced to the work of Cheeger \& Gromov in \cite{CG}. The subject matter here is reduced $\ell^p$-cohomology for $p \in ]1,\infty[$, particularly its vanishing. Results for the triviality of $\ssl{\ell^pH}^1(G)$ are obtained, for example: when $p \in ]1,2]$ and $G$ is amenable; when $p \in ]1,\infty[$ and $G$ is Liouville (\eg of intermediate growth).

\hspace*{.1ex} This is done by answering a question of Pansu in \cite[\S{}1.9]{Pan-rs} for graphs satisfying certain isoperimetric profile. Namely, the triviality of the reduced $\ell^p$-cohomology is equivalent to the absence of non-constant harmonic functions with gradient in $\ell^q$ ($q$ depends on the profile). 
In particular, one reduces questions of non-linear analysis ($p$-harmonic functions) to linear ones (harmonic functions with a very restrictive growth condition).
}}
\end{center}

\section{Introduction}\label{s-intro}

\setcounter{teo}{0}
\renewcommand{\theteo}{\thesection.\arabic{teo}}
\renewcommand{\theques}{\thesection.\arabic{teo}}
\renewcommand{\thecor}{\thesection.\arabic{teo}}

A graph $\Gamma = (\Gver,E)$ is defined by $\Gver$, its set of vertices, and $E$, its set of edges. All graphs will be assumed to be of bounded valency. The set of edges will be thought of as a subset of $\Gver \times \Gver$. The subject matter is the reduced $\ell^p$-cohomology in degree one of the graph $\Gamma$. This is the quotient 
\[
\ssl{\ell^p H}^1(\Gamma):= \D^p(\Gamma) / \srl{ \ell^p(\Gver) + \kk}^{\D^p},
\]
where $\kk = \rr$ or $\mathbb{C}$ is the field where our functions take values (this choice plays no role). See subsection \S{}\ref{ss-prelim} for more details. The main goal of this paper is to give partial answers to a question (dating back at least to Gromov \cite[\S{}8.$A_1$.($A_2$), p.226]{Gro}):
\begin{ques}\label{laquestion}
Let $G$ be an amenable group, is it true that for one (and hence all) Cayley graph $\Gamma$ and all $1<p<\infty$, $\ssl{\ell^pH}^1(\Gamma)=0$?
\end{ques}
The original question concerns cohomology in all degrees.
The case $p=1$ is slightly singular and the case $p=\infty$ is trivially false (see appendix \ref{sapp}). For $p=2$, the positive answer is a famous result of Cheeger \& Gromov \cite{CG} (see also Lück's book \cite{Luck}). The results presented here will give a positive answer for all $p \in ]1,2]$ and covers (for all $p$) many cases.

The basic idea relies on a standard argument which shows that a function which (essentially) only takes one value at infinity has trivial cohomology class. With a few more efforts, this determines $\ssl{\ell^1H}^1$ via boundary values on the end (see appendix \ref{sapp}). The idea is to define a ``boundary value'' of $g \in \D^p(\Gamma)$ on another ideal boundary, namely the Poisson boundary. This boundary is made up by harmonic functions, so a natural candidate for this boundary value is $\limm{n \to \infty} P^{(n)}g$ where $P$ is the random walk operator. 

The convergence of this limit can be obtained as a consequence of return probability (or heat kernels) estimates. To see that the behaviour of the boundary value still says something about the behaviour of $g$ at infinity, a transport problem (between a Dirac measure and the time $n$ distribution of a simple random walk) has to be studied. Some hypothesis on the isoperimetric profile will be necessary. For $F \subset X$ a subset of the vertices, let $\del F$ be the edges between $F$ and $F^\comp$. Let $d \in \rr_{\geq 1}$. Then, a graph $\Gamma$ has 
\[
\begin{array}{lllr}
\IS_d & \text{if there is a }  \kappa >0 \text{ such that for all finite } F \subset X, & |F|^{(d-1)/d} &\leq \kappa |\del F|; \\
\IS_\omega & \text{if there is a }  \kappa >0 \text{ such that for all finite } F \subset X, & |F|      &\leq \kappa |\del F|. 
\end{array}                                                                                                                                 \]
Quasi-homogeneous graphs with a certain (uniformly bounded below) volume growth in $n^d$ will satisfy these isoperimetric profiles, see Woess' book \cite[(4.18) Theorem]{Woe}. The Cayley graph of a group $G$ does not satisfy $\IS_\omega$ if and only if $G$ is amenable. It satisfies $\IS_d$ \emph{for all} $d$ if and only if $G$ is not virtually nilpotent. The upcoming result will apply best to groups which are not virtually nilpotent. See \S{}\ref{ss-prelim} or \cite[\S{}14]{Woe} for more details.
\begin{teo}\label{tpoisson}
Let $\Gamma$ be a graph satisfying $\IS_d$ and let $1\leq q \leq p <d/2$. Then 
\begin{itemize}
\item the natural quotient $\ssl{\ell^qH}^1(\Gamma) \to \ssl{\ell^pH}^1(\Gamma)$ is an injection;
\item if $\Gamma$ has no non-constant bounded harmonic functions whose gradient is in $\ell^p(E)$ then $\forall q <\tfrac{pd}{p+2d}, \ssl{\ell^qH}^1(\Gamma) = \{0\}$;
\item if $\Gamma$ has a non-constant (bounded or not) harmonic functions whose gradient is in $\ell^p(E)$ then $\ssl{\ell^pH}^1(\Gamma) \neq \{0\}$. 
\end{itemize}
\end{teo}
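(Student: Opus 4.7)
My plan follows the roadmap sketched in the introduction: assign to each $g \in \D^p(\Gamma)$ a Poisson boundary value $\beta(g) := \lim_{n \to \infty} P^{(n)}g$, show that $\beta$ descends to a well-defined map on $\ssl{\ell^pH}^1(\Gamma)$, and derive all three items from its behavior. The existence of $\beta(g)$ will come from Nash-type heat-kernel bounds $p_n(x,y) = O(n^{-d/2})$ available under $\IS_d$; combined with gradient estimates and the transience provided by $p < d/2$, these make the telescoping series $g + \sum_{n\geq 0}\bigl(P^{(n+1)}g - P^{(n)}g\bigr)$ converge (say, pointwise and uniformly on finite sets). The limit $\beta(g)$ is harmonic by construction and, by continuity of $P$ on $\D^p$, inherits a gradient in $\ell^p(E)$. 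Since $\beta(c) = c$ for constants and $P^{(n)}f \to 0$ pointwise for $f \in \ell^p$ by transience, $\beta$ sends $\ell^p + \kk$ to constants; continuity of $\beta$ modulo $\kk$ extends this to the whole $\D^p$-closure $\srl{\ell^p+\kk}^{\D^p}$.

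The technical core is a transport estimate linking $g - P^{(n)}g$ to $\nabla g$. Starting from the identity
\[ g(x) - P^{(n)}g(x) = \sum_{y} p_n(x,y)\bigl(g(x) - g(y)\bigr) \]
and decomposing $g(x) - g(y)$ along a judiciously chosen family of oriented paths $\gamma_{x,y}$, the right-hand side is rewritten as $\sum_{e \in E} F_n(e;x)\,\nabla g(e)$ for an explicit flow $F_n$. The combinatorial problem is to arrange the paths so that $F_n$ has $\ell^{q^*}$-mass (with $1/q + 1/q^* = 1$) compatible with the heat-kernel decay. Matching Nash-type smoothing against the Sobolev exponent delivered by $\IS_d$ yields $\|g - P^{(n)}g\|_q \leq C \|\nabla g\|_p$ uniformly in $n$ for every $q < pd/(p+2d)$, together with $\D^q$-convergence of $P^{(n)}g$ to $\beta(g)$.

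Granting these two ingredients, the three items follow easily. For the third, any non-constant harmonic $h$ with $\nabla h \in \ell^p$ satisfies $\beta(h) = h$ (since $P^{(n)}h = h$), which is not constant, so $[h] \neq 0$ in $\ssl{\ell^pH}^1$. For the first, if $g \in \D^q \subset \D^p$ has trivial image in $\ssl{\ell^pH}^1$ then $\beta(g)$ is constant, and the transport estimate realizes $g - \beta(g)$ as a $\D^q$-limit of elements of $\ell^q$, giving $[g] = 0$ in $\ssl{\ell^qH}^1$. For the second, the sharper constraint $q < pd/(p+2d)$ combined with a Sobolev consequence of $\IS_d$ (applied to $g$ and to the flow's dual) forces $\beta(g)$ to be bounded; the hypothesis then pins $\beta(g)$ to a constant and the argument of the first item concludes.

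The main obstacle is unambiguously the transport step: one must exhibit a family of paths (or, equivalently, a flow from the Dirac mass at $x$ to $p_n(x,\cdot)$) whose cost dualizes the $\ell^p$-gradient with the exponent $pd/(p+2d)$. This threshold emerges from the precise interplay between the isoperimetric dimension, the Nash inequality and Sobolev duality, and any slack here is lost in the final exponent. Upgrading pointwise convergence $P^{(n)}g \to \beta(g)$ to $\D^q$-convergence—i.e.\ convergence of the \emph{gradients} in $\ell^q(E)$—is the other place where care beyond routine interpolation is required.
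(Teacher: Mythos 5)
Your overall strategy---boundary values $\beta(g)=\lim_n P^{(n)}*g$, duality of a flow against $\nabla g$, heat-kernel decay supplied by $\IS_d$---is exactly the paper's, but two of the steps you rely on are wrong as stated, and the step you flag as the ``main obstacle'' is left unconstructed even though it has a one-line solution. On the transport step: you do not need to engineer a family of paths, because the walk itself supplies a canonical flow, namely the expected edge-occupation measure $\tau_{\delta_x,P^{(n)}_x}=\sum_{i=0}^{n-1}P^{(i)}_{x,E}$, which satisfies $\nabla^*\tau=\delta_x-P^{(n)}_x$ by construction and whose $\ell^{p'}(E)$-norm is bounded by $\sum_i\|P^{(i)}_x\|_{\ell^{p'}(X)}\lesssim\sum_i i^{-d/(2p)}$, finite precisely when $p<d/2$. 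That single estimate yields existence of $\beta(g)$, its independence of the representative and of $p$, and (after splitting the pairing $\pgen{\nabla g\mid\tau}$ into edges near $x$ and far from $x$) the converse ``constant boundary value $\Rightarrow[g]=0$'' that items 1 and 2 require.

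More seriously, your claim that $\beta(g)$ ``inherits a gradient in $\ell^p(E)$ by continuity of $P$ on $\D^p$'' is unjustified on a general graph and is too strong: if it held, item 2 would be true for every $q\le p$ and the theorem would fully answer Pansu's question, whereas the loss of exponent in the statement exists precisely because one cannot conclude this. What one actually gets, by writing $\beta(g)-g=\bigl(\sum_{i\ge0}P^{(i)}\bigr)*\Delta g$ and applying Young's inequality with $\|P^{(i)}_x\|_{\ell^s}\lesssim i^{-d/(2s')}$, is $\beta(g)-g\in\ell^r(X)$ only for $r>dq/(d-2q)$ (large $r$, i.e.\ \emph{weak} spaces); your asserted uniform bound $\|g-P^{(n)}g\|_{\ell^q}\le C\|\nabla g\|_{\ell^p}$ for $q<pd/(p+2d)$ has the inequality on the exponent pointing the wrong way and is false. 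Consequently the role you assign to the threshold in item 2 is misplaced: boundedness of $\beta(g)$ is free once one reduces to bounded $g$ via Holopainen--Soardi (a reduction you omit, and which is needed since the hypothesis only concerns \emph{bounded} harmonic functions), while the constraint on $q$ is what guarantees $dq/(d-2q)\le p$, hence $\nabla\beta(g)\in\ell^p(E)$, so that the hypothesis applies to $\beta(g)$ at all. Items 1 and 3 of your outline do match the paper's arguments once the transport lemma is in place.
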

More precisely, a map from $\D^p(\Gamma)$ to harmonic functions modulo constants on $\Gamma$ is exhibited, and it is shown does not depends on the representative of the reduced $\ell^p$-cohomology class ($1 \leq p < \infty$). This maps sends bounded functions to bounded functions. To establish vanishing of reduced $\ell^p$-cohomology in degree $1$, it is sufficient to consider only bounded functions (for $1<p<\infty$), thanks to a lemma of Holopainen \& Soardi \cite[Lemma 4.4]{HS} (see Lemma \ref{tholoso-l} for a simple proof).

Theorem \ref{tpoisson} almost answers a question of Pansu \cite[Question 6 in \S{}1.9]{Pan-rs}: if $\Gamma$ has $\IS_d$ for all $d$, is the existence of a non-constant harmonic form whose gradient is in $\ell^p(E)$ equivalent to non-trivial reduced $\ell^p$-cohomology in degree $1$? Theorem \ref{tpoisson} shows this holds if one allows to lose some regularity ($q$ is bigger than $p$).
This theorem is the compilation of Corollaries \ref{tpoiss1-c}, \ref{tpoiss2-c} and \ref{tpoiss3-c}.

Note that though the current methods are not sufficient to show the existence of a harmonic function with finite $\ell^p$ gradient in each reduced cohomology class, it is an easy consequence of the methods that if such a function exists (and the isoperimetric profile is nice enough), then it is unique up to a constant, see Remark \ref{runiq}. 

Recall that all groups of subexponential growth are Liouville (see Avez \cite{Avez74}), \ie the Poisson boundary associated to the simple random walk on the Cayley graph is trivial (in this article, by ``trivial Poisson boundary'', one should always read trivial Poisson boundary for simple random walk in the Cayley graph).
\begin{corn}
If $G$ is a group of growth at least polynomial of degree $d$ and one of its Cayley graphs has trivial Poisson boundary (for the simple random walk, \ie $G$ is Liouville), then $\ssl{\ell^pH}^1(\Gamma) =\{0\}$ for any $1 \leq p < d/2$. 

In particular, groups of intermediate growth and $\zz_2 \wr \zz$ has trivial reduced $\ell^p$-cohomology in degree $1$, for any $p \in [1,\infty[$.
\end{corn}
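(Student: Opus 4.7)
The plan is to combine the standard passage from volume growth to isoperimetric profile with the three bullets of Theorem \ref{tpoisson}. First, by the classical Coulhon-Saloff-Coste estimate, a group $G$ with volume growth at least polynomial of degree $d$ has any of its Cayley graphs $\Gamma$ satisfying the isoperimetric inequality $\IS_d$, so the hypotheses of Theorem \ref{tpoisson} are met with this $d$. Second, the Liouville hypothesis is by definition the statement that every bounded harmonic function on $\Gamma$ is constant; hence a fortiori there are no non-constant bounded harmonic functions whose gradient lies in $\ell^{p'}(E)$, for any $p' \in [1,\infty[$. This is exactly the shape of the hypothesis fed into the second bullet of Theorem \ref{tpoisson}.

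Now fix $1 \leq p < d/2$. The plan is to apply the second bullet of Theorem \ref{tpoisson} with a parameter $p' \in [1, d/2[$ chosen so that $p$ is captured by the range $[1, p'd/(p'+2d))$ it produces, and then to combine this with the first bullet of the same theorem, namely the injection $\ssl{\ell^pH}^1(\Gamma) \inj \ssl{\ell^{p'}H}^1(\Gamma)$ for $p \leq p'$, in order to propagate vanishing down to the actual target $p$ when necessary. Conceptually this is the mechanism flagged in the paragraph preceding the corollary: the map $\D^p(\Gamma) \to \{\text{harmonic}\}/\{\text{constants}\}$ built in the proof of Theorem \ref{tpoisson} sends bounded functions to bounded harmonic functions, which are constant by Liouville. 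Because the Holopainen-Soardi reduction allows one to test vanishing of $\ssl{\ell^pH}^1(\Gamma)$ only on bounded representatives (for $1<p<\infty$), one concludes $\ssl{\ell^pH}^1(\Gamma)=\{0\}$; the $p=1$ case is then obtained by the same injection from the first bullet.

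For the second (``in particular'') assertion, both groups of intermediate growth and $\zz_2 \wr \zz$ are Liouville: the former because subexponential growth forces the Poisson boundary of the simple random walk to be trivial (as recalled just before the corollary), the latter by the classical Kaimanovich-Vershik example. Both families have super-polynomial volume growth, so their Cayley graphs satisfy $\IS_d$ for every $d \geq 1$. Applying the first part of the corollary with $d$ arbitrarily large then yields $\ssl{\ell^pH}^1(\Gamma)=\{0\}$ for every $p \in [1,\infty[$. The main obstacle I anticipate is the quantitative bookkeeping in the second paragraph: one has to align the threshold $p'd/(p'+2d)$ from the second bullet of Theorem \ref{tpoisson} with the claimed range $p<d/2$, which I would treat as a careful verification once the theorem is invoked as a black box.
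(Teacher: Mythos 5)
Your reduction of the ``in particular'' clause and your list of ingredients (growth of degree $d$ gives $\IS_d$; Liouville means every bounded harmonic function is constant; the Holopainen--Soardi reduction to bounded representatives; the injection from the first bullet to handle $p=1$) are all correct and match the paper. The problem is your primary mechanism. The second bullet of Theorem \ref{tpoisson}, applied with a parameter $p'$, only yields vanishing of $\ssl{\ell^qH}^1(\Gamma)$ for $q < p'd/(p'+2d)$, and the hypotheses of the theorem force $p' < d/2$. Since $t \mapsto td/(t+2d)$ is increasing, the best threshold reachable this way is $\sup_{p'<d/2}\, p'd/(p'+2d) = d/5$. The ``careful verification'' you defer at the end is therefore not a verification but an impossibility: no choice of $p'$ captures the range $d/5 \le p < d/2$, and combining with the injection of the first bullet only propagates vanishing \emph{downward} in $p$, so it cannot close the gap either. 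Carried out literally, your plan proves the corollary only for $p < d/5$.

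The fix is the argument you mention only ``conceptually'', which is in fact the paper's actual proof (Corollary \ref{tpoiss2-c}). The loss of exponent in the second bullet comes from Lemma \ref{tquespan-l}, which is needed only because the hypothesis there (``no non-constant bounded harmonic function with gradient in $\ell^{p}$'') is much weaker than Liouville. Under the full Liouville hypothesis one argues directly: for $p<d/2$ and $g \in \D^p(\Gamma)$ bounded, the boundary value $\tilde g = \lim_n P^{(n)}*g$ exists because $\IS_d$ gives $\|P^{(n)}_x\|_{\ell^{p'}(X)} \preccurlyeq n^{-d/2p}$, summable for $p<d/2$; it is a bounded harmonic function, hence constant by Liouville; Lemma \ref{tbndtriv-l} then gives $[g]=0$, and Lemma \ref{tholoso-l} upgrades this to all of $\D^p(\Gamma)$. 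This reaches the full range $p<d/2$ with no exponent bookkeeping. You should make this the main argument and drop the detour through the second bullet.
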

Note it is unknown whether being Liouville is an invariant of quasi-isometry for Cayley graphs (it is not even known whether it is possible for a group to have a non-Liouville Cayley graph and a Liouville one). But, for the current purposes, it actually suffices that $G$ has a Cayley graph quasi-isometric to a Liouville graph in order to have that its $\ell^p$-cohomology vanish for all $p \in ]1,\infty[$.

Using \tvantran~and Theorem \ref{tpoisson} one can show that [amenable] lamplighters on $\zz^d$ (\eg $\zz_2 \wr \zz^d$) have harmonic functions with gradient in $\ell^p$. See also \cite{moi-lpharm} for more results on semi-direct products $G = N \rtimes H$ where $N$ is not finitely generated but $G$ is.

The reduced $\ell^2$-cohomology is trivial by Cheeger \& Gromov \cite{CG} (and the $\ell^p$-cohomology is trivial for any $p \in ]1,\infty[$ for groups which \emph{do not} have $\IS_d$ for some $d$, see \S{}\ref{ss-discu} below), using Theorem \ref{tpoisson} one gets a positive answer to question \ref{laquestion}:
\begin{corn}
\emph{Any} finitely generated amenable groups has trivial reduced $\ell^p$-cohomology (in degree one) for all $p \in ]1,2]$. Virtually-$\zz$ groups are the only amenable groups with non-trivial reduced $\ell^1$-cohomology in degree $1$.
\end{corn}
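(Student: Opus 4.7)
The plan is to combine the Cheeger--Gromov vanishing of $\ssl{\ell^2 H}^1$ for amenable groups with the monotonicity in $p$ provided by the first bullet of Theorem~\ref{tpoisson}. I would first record that, for a bounded-valency graph $\Gamma$ with counting measure on $E$, the inclusion $\ell^p(E) \subseteq \ell^2(E)$ holds for every $p \in [1,2]$: any $p$-summable sequence tends to zero, so $|x_e|^2 \le |x_e|^p$ outside a finite set. Hence any harmonic function on $\Gamma$ whose gradient lies in $\ell^p(E)$ for some $p \le 2$ automatically has gradient in $\ell^2(E)$.

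Next, for amenable $G$, Cheeger--Gromov gives $\ssl{\ell^2 H}^1(\Gamma) = \{0\}$; the contrapositive of the third bullet of Theorem~\ref{tpoisson} then excludes any non-constant harmonic function on $\Gamma$ with $\ell^2$-gradient, and the inclusion above extends this exclusion to every $p \in ]1,2]$.

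To turn this absence of harmonic functions into the vanishing of $\ssl{\ell^p H}^1$ itself, I would invoke the first bullet of Theorem~\ref{tpoisson}: whenever $\Gamma$ satisfies $\IS_d$ for some $d > 4$, the natural map $\ssl{\ell^p H}^1(\Gamma) \to \ssl{\ell^2 H}^1(\Gamma) = \{0\}$ is an injection for $p \in ]1,2]$, giving the conclusion in that case. The main obstacle is handling amenable groups \emph{without} any polynomial isoperimetric profile, typically those of exponential growth such as lamplighters. The natural workaround is to pass to the Cayley graph of $G \times \zz^N$ with $N$ large enough to force $\IS_d$ for some $d > 4$ on the product (which stays amenable), run the previous argument there to obtain $\ssl{\ell^p H}^1(G \times \zz^N) = \{0\}$, and finally transfer this vanishing back to $G$ itself, for instance by some averaging or restriction over the amenable factor $\zz^N$ at the level of cocycles. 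This transfer step, rather than the harmonic-function input, is the delicate point I would expect to account for most of the technical work.

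For the second assertion I would refer to Appendix~\ref{sapp}: the picture there is that $\ssl{\ell^1 H}^1(\Gamma)$ is entirely controlled by boundary values of $\D^1$ functions on the space of ends of $\Gamma$. Amenable finitely generated groups have one or two ends; in the one-ended case every $\D^1$ function has a single boundary value at infinity, so every $\ssl{\ell^1 H}^1$ class collapses to zero, while in the two-ended case --- which by Stallings' theorem is precisely the virtually-$\zz$ case --- an explicit Heaviside-type function on $\zz$ (gradient supported on a single edge, hence in $\ell^1$, yet not $\D^1$-approximable by $\ell^1 + \kk$) witnesses a non-trivial class.
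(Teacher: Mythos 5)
The second half of your argument (the $\ell^1$ statement via boundary values on ends, i.e.\ Proposition \ref{tcohoml1-p}, together with Stallings to identify the two-ended amenable groups as the virtually-$\zz$ ones) is correct and is exactly what the paper does. The first half, however, contains a genuine error: you have the isoperimetric profiles backwards. For the profiles $\IS_d$ appearing in Theorem \ref{tpoisson}, \emph{faster} volume growth gives a \emph{stronger} profile: as recalled in \S{}\ref{ss-superpoly}, a group of polynomial growth of degree $d$ satisfies $\IS_d$, while a group of \emph{superpolynomial} growth --- in particular any lamplighter, and any amenable group of exponential growth --- satisfies $\IS_d$ for \emph{every} $d$. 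So the exponential-growth groups you single out as the ``main obstacle'' are precisely the easy case: the first bullet of Theorem \ref{tpoisson} applies with some $d>4$ and gives the injection $\ssl{\ell^pH}^1(\Gamma)\hookrightarrow \ssl{\ell^2H}^1(\Gamma)=\{0\}$ for $p\in\,]1,2]$ directly. The genuinely missing case is the opposite one: amenable groups of polynomial growth of degree at most $4$ (such as $\zz$, $\zz^2$ or the discrete Heisenberg group), which satisfy $\IS_d$ only for $d$ up to their growth degree and hence never with $d>4$. For these, Theorem \ref{tpoisson} is of no use and the paper instead invokes the classical vanishing results for groups of polynomial growth listed in \S{}\ref{ss-discu} (\cite{Kap}, \cite{Tes}, \cite{HS-polyn}); your proof needs that citation to be complete.

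Your proposed workaround --- pass to $G\times\zz^N$ to gain a profile and then ``transfer back'' --- does not repair this. Besides being aimed at the wrong class of groups, the transfer step is unjustified and cannot be formal: vanishing of first reduced $\ell^p$-cohomology does not pass from a product to a factor (for instance $\ssl{\ell^2H}^1(F_2\times\zz)=\{0\}$ while $\ssl{\ell^2H}^1(F_2)\neq\{0\}$), so any ``averaging over $\zz^N$'' would have to use amenability of $G$ in an essential and unexplained way; the only implication of this flavour proved in the paper, Theorem \ref{tvanquot-t}, goes from the normal subgroup up to the ambient group, not down. Finally, your detour through harmonic functions (bullets two and three of Theorem \ref{tpoisson}, plus the inclusion $\ell^p(E)\subseteq\ell^2(E)$) is redundant: once $\IS_d$ with $d>4$ is available, the first bullet alone already injects $\ssl{\ell^pH}^1(\Gamma)$ into $\ssl{\ell^2H}^1(\Gamma)=\{0\}$, which is the paper's one-line argument.
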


The Poisson boundary is not an invariant of quasi-isometry (see, for example, T.~Lyons' examples \cite{Lyo}). However, the following corollary, which was known for $p=2$ (trivially), may now be extended:
\begin{corn}
If $\Gamma$ is a graph satisfying $\IS_d$ and $\ssl{\ell^pH}^1(\Gamma) \neq \{0\}$ for some $1 \leq p<d/2$, then any graph quasi-isometric to $\Gamma$ has non-trivial Poisson boundary.
\end{corn}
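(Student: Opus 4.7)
The plan is to transport the non-vanishing of reduced $\ell^p$-cohomology from $\Gamma$ to any quasi-isometric graph $\Gamma'$, and then apply Theorem \ref{tpoisson} to $\Gamma'$ in order to extract a non-constant bounded harmonic function on $\Gamma'$; such a function is by definition a witness of non-triviality of the Poisson boundary of the simple random walk.

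The first step would be to invoke two standard quasi-isometry invariance statements for bounded valency graphs: the isoperimetric profile $\IS_d$ is QI-invariant, and reduced $\ell^p$-cohomology in degree one is QI-invariant for $p \in (1,\infty)$ (the case $p=1$ being slightly singular in the sense alluded to in the introduction, one either treats it separately via the appendix or reduces to $p>1$ using item (1) of Theorem \ref{tpoisson}). This yields that $\Gamma'$ still satisfies $\IS_d$ and that $\ssl{\ell^pH}^1(\Gamma')\neq\{0\}$. The second step is the contrapositive of item (2) of Theorem \ref{tpoisson}, applied to $\Gamma'$: one picks $p'<d/2$ with $p<\tfrac{p'd}{p'+2d}$, i.e.\ $p'>\tfrac{2pd}{d-p}$; the non-vanishing of $\ssl{\ell^pH}^1(\Gamma')$ then forces the hypothesis of item (2) to fail at level $p'$, producing a non-constant bounded harmonic function on $\Gamma'$ (in fact with gradient in $\ell^{p'}$, though only boundedness and non-constancy are needed to conclude).

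The main obstacle is the matching of the two indices: the window $\bigl(\tfrac{2pd}{d-p},\,\tfrac{d}{2}\bigr)$ is non-empty only when $p<d/5$. For the remaining range $p\in[d/5,d/2)$ one cannot simply shrink $p$, since the injection of item (1) runs upwards only. One would then have to revisit the construction of the map $\D^p(\Gamma')\to\{\text{harmonic functions modulo constants}\}$ sketched right after Theorem \ref{tpoisson}, pick a bounded representative of a non-trivial class (possible by the Holopainen--Soardi reduction mentioned in the discussion), and check that its harmonic image is non-constant. This is precisely the argument that underlies item (2), so a closer reading of that proof should cover the full range $p<d/2$ and complete the deduction of non-triviality of the Poisson boundary of $\Gamma'$.
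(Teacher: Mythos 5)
Your overall strategy is the paper's: transport the hypotheses along the quasi-isometry (both $\IS_d$ and reduced $\ell^p$-cohomology in degree one are QI-invariant) and then use the boundary-value machinery to extract a non-constant bounded harmonic function on $\Gamma'$, hence a non-trivial Poisson boundary. The first step is fine, including your remark on $p=1$.

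The problem is your choice of entry point into the harmonic-function machinery. Item (2) of Theorem \ref{tpoisson} is the wrong tool here: its hypothesis (no non-constant bounded harmonic function \emph{with gradient in} $\ell^{p'}$) is weaker than the Liouville property, which is why its conclusion is correspondingly weaker and why your index matching only closes for $p<d/5$. You correctly diagnose this, but the escape you propose for $p\in[d/5,d/2)$ is left as ``a closer reading should cover it,'' which is not a proof. The statement you actually need is already in the paper as Corollary \ref{tpoiss2-c}: if $\Gamma'$ has $\IS_d$ and is Liouville, then $\ssl{\ell^pH}^1(\Gamma')=\{0\}$ for \emph{all} $p<d/2$. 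Its contrapositive gives the corollary immediately and uniformly in $p$, with no window condition. Concretely (and this is exactly the sketch in your last paragraph, made precise by Lemma \ref{tholoso-l} and Lemma \ref{tbndtriv-l}): a non-trivial class in $\ssl{\ell^pH}^1(\Gamma')$ has a bounded representative $g$ whose boundary value $\lim_n P^{(n)}*g$ exists because $p<d/2$, is bounded and harmonic for the simple random walk on $\Gamma'$, and cannot be constant since a constant boundary value forces $[g]=0$. So the gap is real as written, but it is repaired by replacing your appeal to item (2) with an appeal to Corollary \ref{tpoiss2-c}; no new argument is needed.
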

Indeed, if $\Gamma$ satisfies $\IS_d$ and given that $p< d/2$, then inside the Poisson boundary lives the image (by boundary values) of the non-trivial reduced $\ell^p$-cohomology class representable by bounded functions; denote this image $\jo{P}_p$. Furthermore, if $D = \supp{\Gamma \text{ has } \IS_d} d \in [1,\infty]$, then $\jo{P} = \cup_{p < 2D} \jo{P}_p$ is a part of the Poisson boundary which will persist under quasi-isometry. One could also try to thicken $\jo{P}$ by considering more generic Banach spaces (\eg Orlicz spaces).

The second result is but a consequence of the first.
\begin{teo}\label{tinclu-t}
If $\Gamma$ has $\IS_\omega$ with constant $\kappa$, then the cohomology is always reduced (\ie $\ssl{\ell^p H}^1(\Gamma) = \ell^p H^1(\Gamma)$). Furthermore, if $n = \lceil \kappa^{-1} \rceil$ and $\Gamma^{[n]}$ is the $n$-fuzz of $\Gamma$ (the graph obtained from $\Gamma$ by adding edges between all points at distance $n$ in $\Gamma$), then there is a spanning tree $T$ in $\Gamma^{[n]}$ so that the non-trivial cohomology classes are exactly those non-trivial cohomology class in $T$ which also belong to $\D^p(\Gamma^{[n]})$.
\end{teo}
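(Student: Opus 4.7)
\medskip

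\noindent\textbf{Plan.} Both statements rest on a single analytic fact: on a graph with $\IS_\omega$ of constant $\kappa$, the $\ell^p$-Poincar\'e inequality $\|f\|_p\le C(p,\kappa)\|df\|_p$ holds for every $f\in\ell^p(\Gver)$, which I would derive in the standard way by applying the $\ell^1$-Cheeger estimate to the super-level sets $\{|f|^p>t\}$ and integrating, bounded valency providing the passage from $\ell^1$ to $\ell^p$. The inequality forces $d:\ell^p(\Gver)\to\ell^p(E)$ to have closed image, so $\ell^p(\Gver)+\kk$ is closed in $\D^p(\Gamma)$: a sequence $g_n+c_n\to f$ in $\D^p$ (with $g_n\in\ell^p(\Gver)$, $c_n\in\kk$) has $dg_n$ Cauchy in $\ell^p(E)$, hence $g_n$ Cauchy by Poincar\'e, with a limit $g\in\ell^p(\Gver)$ so that $f-g\in\kk$. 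This gives the first assertion $\ssl{\ell^p H}^1(\Gamma)=\ell^p H^1(\Gamma)$.

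\medskip

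\noindent For the second assertion, I first observe that $n$-fuzzing leaves $\D^p$ unchanged up to equivalent norms: each edge of $\Gamma^{[n]}$ decomposes as at most $n$ edges of $\Gamma$, so $\|d_{\Gamma^{[n]}}f\|_p\le Cn\|d_\Gamma f\|_p$ (the reverse being immediate). Thus $\ssl{\ell^p H}^1(\Gamma)\cong\ssl{\ell^p H}^1(\Gamma^{[n]})$ and I may work inside $\Gamma^{[n]}$. The main geometric input I then need---and the main obstacle of the proof---is a \emph{non-amenable} spanning tree $T\subset\Gamma^{[n]}$. I would build $T$ inductively from a root, producing a nested exhaustion $\{x_0\}=S_0\subset S_1\subset\cdots$ of $\Gver$ in which the isoperimetric inequality $|\partial S|\ge\kappa|S|$ combined with $n\kappa\ge 1$ (so that $n$ consecutive $\Gamma$-steps multiply the volume by at least $2$) allows the assignment of each new vertex of $S_{k+1}\setminus S_k$ as child of a suitable vertex of $S_k$ via $\Gamma^{[n]}$-edges, in a way that keeps every branch from dying out; a Hall-type matching argument inside the $n$-neighborhood should do this. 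The uniform branching then makes $T$ have positive Cheeger constant.

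\medskip

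\noindent Granting such a $T$, the conclusion is formal. One has $\ell^p(\Gver)\subset\D^p(\Gamma^{[n]})\subset\D^p(T)$ (the latter because $E(T)\subset E(\Gamma^{[n]})$) and the $\D^p(T)$-topology is coarser than that of $\D^p(\Gamma^{[n]})$, so the identity on $\D^p(\Gamma^{[n]})$ descends to a well-defined map $\ssl{\ell^p H}^1(\Gamma^{[n]})\to\ssl{\ell^p H}^1(T)$. Applying the first assertion to $T$ (valid because $T$ is non-amenable) gives $\srl{\ell^p(\Gver)+\kk}^{\D^p(T)}=\ell^p(\Gver)+\kk$. Hence if $f\in\D^p(\Gamma^{[n]})$ satisfies $[f]_T=0$, then $f=g+c$ with $g\in\ell^p(\Gver)$ and $c\in\kk$; since $\ell^p(\Gver)\subset\D^p(\Gamma^{[n]})$ by bounded valency, this forces $[f]_{\Gamma^{[n]}}=0$. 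The map is therefore injective, its image is by construction the set of $T$-classes admitting a representative in $\D^p(\Gamma^{[n]})$, and the non-trivial classes of $\Gamma$ correspond exactly to the non-trivial $T$-classes of that form.
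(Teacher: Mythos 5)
Your proposal is correct in outline and, in two places, takes a genuinely different route from the paper. For the first assertion the paper invokes its Remark on Lohou\'e's construction: by Kesten's theorem $\IS_\omega$ gives $\|P\|_{\ell^2\to\ell^2}<1$, Riesz--Thorin interpolation gives $\|P\|_{\ell^p\to\ell^p}<1$, so $\Delta^{-1}=\sum_{i\ge 0}P^{(i)}$ is bounded on $\ell^p$ and every unreduced class contains a harmonic representative differing from $g$ by an element of $\ell^p(\Gver)$. Your co-area/Poincar\'e argument ($\|f\|_p\le C\|\nabla f\|_p$, hence $\nabla$ has closed range on $\ell^p(\Gver)$ and $\ell^p(\Gver)+\kk$ is closed in $\D^p$) reaches the same conclusion more directly and without the spectral machinery; it is valid provided you note connectedness (so $\nabla(f-g)=0$ forces $f-g\in\kk$) and run the super-level-set estimate on finitely supported functions first, extending by density. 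For the second assertion, your formal finish --- inject $\ell^pH^1(\Gamma^{[n]})$ into $\ell^pH^1(T)$ via $E(T)\subset E(\Gamma^{[n]})$ and use closedness of $\ell^p(\Gver)+\kk$ in $\D^p(T)$ to get injectivity --- is cleaner than the paper's, which instead takes the boundary value of $g$ for the simple random walk on $T$ (a $T$-harmonic representative in the same class, the difference lying in $\ell^p(X)$) and reads off a commuting square of injections.

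The one real weak point is the spanning tree. The existence of a spanning tree with positive Cheeger constant inside a graph of Cheeger constant $\ge 1$ is precisely Benjamini--Schramm's theorem (which the paper cites and whose proof is indeed a Hall-type matching using the doubling $|B_1(F)|\ge 2|F|$); your paragraph correctly identifies both the needed statement and the mechanism, but ``a Hall-type matching argument should do this'' is a plan, not a proof, and the bookkeeping needed to keep every branch alive while exhausting all of $\Gver$ is the nontrivial content of that theorem. You should cite it rather than leave it as a sketch; with that citation in place your argument is complete.
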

The equality between reduced and unreduced cohomology was already known. For Cayley graphs, see Guichardet \cite[Corollaire 1]{Guich} or Martin \& Valette \cite[Corollary 2.4]{MV}): a group is amenable if and only if $\ssl{\ell^p H}^1(G) \neq \ell^p H^1(G)$, for some, and hence any, $p \in ]1,\infty[$. For more general graphs, it is implicit at least in Lohoué \cite{Loh}.

In the author's mind, the interest of this results lies in the following idea: to compute the $\ell^p$ cohomology of a graph with positive isoperimetric constant, one needs only to run through the list of boundary values for a spanning tree, and look which one are in $\D^p$ of the initial graph. Of course, even if boundary values of the tree are somehow much more reasonable to compute (either by the methods of Bourdon \& Pajot in \cite{BP} or as the harmonic functions associated to the random walk), this is probably not directly usable unless the spanning tree produced by Benjamini \& Schramm in \cite{BS} may be made explicit.

Some results on $\ell^{p,q}$-cohomology are presented in \S{}\ref{ss-varia}.

Using Theorem \ref{tpoisson}, it is possible to get a vanishing result for groups with normal subgroups:
\begin{teo}\label{tvanquot-t}
Let $p\in [1,\infty[$. Assume $G$ is a finitely generated group, $N \lhd G$ is finitely generated as a group and the growth of $N$ is at least polynomial of degree $>2p$. Assume further that $G/N$ is infinite and $\ssl{\ell^pH}^1(\Gamma_N) =\{0\}$, where $\Gamma_N$ is some Cayley graph of $N$. Then for all Cayley graphs $\Gamma_G$ of $G$, $\ssl{\ell^pH}^1(\Gamma_G) =0$. If further $N$ is non-amenable then the statement is true in unreduced cohomology.
\end{teo}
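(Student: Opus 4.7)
The plan is to use the normality of $N$ in $G$ to restrict any representative of a class in $\ssl{\ell^pH}^1(\Gamma_G)$ to $N$-cosets, apply the vanishing hypothesis on each coset, and then glue this coset-wise information using the infinitude of the quotient $G/N$ together with the growth of $N$. For $1<p<\infty$ the Holopainen--Soardi argument of \S\ref{ss-discu} lets us assume the representative $f\in\D^p(\Gamma_G)$ is bounded (the case $p=1$ is handled by the separate boundary-values argument of the appendix). Choose a symmetric generating set $S_G$ of $G$ containing a generating set $S_N$ of $N$, so that within-coset edges of $\Gamma_G$ are isomorphic via $n\mapsto gn$ to the edges of $\Gamma_N$. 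The restriction $f|_{gN}$ then has gradient in $\ell^p$; its class in $\ssl{\ell^pH}^1(\Gamma_N)$ vanishes by hypothesis, so there exists $c_g\in\kk$ with $f|_{gN}-c_g\in\srl{\ell^p(N)}^{\D^p(\Gamma_N)}$. The growth hypothesis on $N$ yields $\IS_{d_N}$ with $d_N>2p$ on $\Gamma_N$, and the associated Sobolev inequality gives $f|_{gN}-c_g\in\ell^{p_N^*}(N)$ with $p_N^*:=pd_N/(d_N-p)>p$; in particular, for every $\eps>0$ only finitely many $n\in N$ satisfy $|f(gn)-c_g|>\eps$.

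The next key point is that $g\mapsto c_g$ is constant. Given adjacent cosets $gN, gsN$ with $s\in S_G\setminus N$, normality gives $gns=(gs)(s^{-1}ns)$ with $s^{-1}ns\in N$, so the edges $(gn,gns)$, indexed by $n\in N$, form an infinite between-coset family. Since $\sum_n|f(gns)-f(gn)|^p\leq\|\nabla f\|_{p}^p<\infty$, while the $\ell^{p_N^*}$-decay forces $|f(gn)-c_g|$ and $|f(gns)-c_{gs}|$ to be smaller than any $\eps>0$ off a finite set of $n$, a triangle-inequality argument yields $c_g=c_{gs}$ (otherwise one finds infinitely many edges with $|f(gns)-f(gn)|\geq|c_g-c_{gs}|/2$, contradicting the $\ell^p$ bound). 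Since $G/N$ is infinite and connected as a Cayley graph of the quotient, all $c_g$ agree; subtracting this common constant one may assume $c_g=0$ for every $g$.

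It remains to exhibit approximations of $f$ by $\ell^p(G)$-functions in the $\D^p$-seminorm. Exhausting $G/N$ by finite sets $F_k$, choose for each $g\in F_k$ a finitely supported $\eta_k^g\in\ell^p(N)$ with $\|\nabla\eta_k^g-\nabla f|_{gN}\|_p$ small (summably over $g\in F_k$) and, by Sobolev, $\|\eta_k^g-f|_{gN}\|_{p_N^*}$ correspondingly small. Set $\phi_k(gn)=\eta_k^g(n)$ for $g\in F_k$ and $\phi_k(gn)=0$ otherwise; then $\phi_k\in\ell^p(G)$. The gradient error $\nabla\phi_k-\nabla f$ splits into within-coset contributions (small for $g\in F_k$ by the approximation quality, and small for $g\notin F_k$ because $\sum_{g\notin F_k}\|\nabla f|_{gN}\|_p^p\to 0$) and between-coset contributions, which are controlled by combining the uniform Sobolev bound $\|f|_{gN}\|_{p_N^*}^p\leq C\|\nabla f|_{gN}\|_p^p$ with a careful boundary-term analysis. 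Finally, when $N$ is non-amenable: $N\leq G$ forces $G$ non-amenable, hence both $\Gamma_N$ and $\Gamma_G$ satisfy $\IS_\omega$, and Theorem~\ref{tinclu-t} identifies reduced with unreduced $\ell^p$-cohomology on each, promoting the reduced vanishing to the unreduced one.

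The main obstacle is the final gluing: the per-coset Sobolev control gives only $\ell^{p_N^*}$ with $p_N^*>p$, so accumulating the between-coset boundary terms in $\ell^p$ across infinitely many cosets is delicate. Making this precise requires balancing the accuracy of the per-coset approximations $\eta_k^g$ against the exhaustion $F_k$ and using crucially the finiteness of $\sum_g\|\nabla f|_{gN}\|_p^p$ together with the uniform per-coset Sobolev bound.
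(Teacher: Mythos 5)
Your overall strategy---restrict a representative $f$ to the cosets $gN$, use the vanishing hypothesis on $\Gamma_N$ to extract a constant $c_g$ which $f_{\mid gN}$ approaches at infinity, and force all the $c_g$ to coincide using the infinitely many between-coset edges $(gn,gns)$ together with $\nabla f\in\ell^p(E)$---is exactly the skeleton of the paper's argument, which packages the coset analysis as a statement (Corollary \ref{tgamn-c}) about graphs stitched from copies of $\Gamma_N$. Your one substantive substitution is sound: you replace the paper's random-walk boundary values by the Sobolev embedding $\|u\|_{\ell^{p^*}}\leq C\|\nabla u\|_{\ell^p}$ ($p^*=d_Np/(d_N-p)$) coming from $\IS_{d_N}$, applied to $f_{\mid gN}-c_g\in\srl{\ell^p(N)}^{\D^p(\Gamma_N)}$; this is legitimate (and in fact only needs $d_N>p$). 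The problem is your final step, and it is a genuine gap, not just a technicality: the patching construction with per-coset approximants $\eta_k^g$ supported over a finite set $F_k$ of cosets does not close. For two adjacent cosets inside $F_k$, the between-coset part of $\|\nabla\phi_k-\nabla f\|_{\ell^p}^p$ is $\sum_n|(f-\phi_k)(gns)-(f-\phi_k)(gn)|^p$, which needs $\ell^p$-control of $f-\phi_k$ coset by coset, whereas Sobolev only gives $\ell^{p^*}$ with $p^*>p$ and $f_{\mid gN}$ need not lie in $\ell^p(N)$ at all. Worse, on the edges leaving $F_k$ (from $g\in F_k$ to $gs\notin F_k$) the error contains $\sum_n|\eta_k^g(n)|^p=\|\eta_k^g\|_{\ell^p(N)}^p$, which is forced to blow up as $\eta_k^g\to f_{\mid gN}$ whenever $f_{\mid gN}\notin\ell^p(N)$; no balancing of $F_k$ against the approximation quality repairs this.

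The repair is to drop the per-coset approximants entirely. The Sobolev constant is the same for every coset, so after normalising the common constant to $0$ one has $\sup_n|f(gn)|\leq\|f_{\mid gN}\|_{\ell^{p^*}}\leq C\|\nabla f_{\mid gN}\|_{\ell^p}$, and since the within-coset edge sets of distinct cosets are disjoint, $\sum_{gN\in G/N}\|\nabla f_{\mid gN}\|_{\ell^p}^p\leq\|\nabla f\|_{\ell^p(E)}^p<\infty$. Hence for every $\eps>0$ all but finitely many cosets satisfy $\sup|f_{\mid gN}|<\eps$, and on each of the remaining cosets the set $\{|f|\geq\eps\}$ is finite: thus $f$ vanishes at infinity on all of $\Gamma_G$, not merely coset-wise. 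A single global truncation $f_\eps$ then finishes the proof as in Lemma \ref{tholoso-l} and Proposition \ref{tcohoml1-p}: $f-f_\eps$ is finitely supported, $|\nabla f_\eps|\leq|\nabla f|$ pointwise with $\nabla f_\eps\to0$ pointwise, so $\|\nabla f_\eps\|_{\ell^p}\to0$ by dominated convergence and $[f]=0$. This ``constant at infinity implies trivial class'' endgame is precisely what the paper reaches via Lemma \ref{tbndtriv-l}; with it in place your argument becomes a complete (and in places more elementary) proof, and your treatment of the unreduced case via non-amenability of $G$ and Theorem \ref{tinclu-t} matches the paper's appeal to Remark \ref{rpans}.
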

The case where $N$ is non-amenable was already done in Bourdon, Martin \& Valette \cite[Theorem 1,1)]{BMV}. Bourdon in \cite[paragraph \textbf{4)} in \S{}1.6]{Bourdon} has given a very nice example showing sharpness of the previous statement: there is a group $\Gamma$ with $\ssl{\ell^pH}^1(\Gamma) \neq \{0\}$ for $p>2$ and an exact sequence $1 \to N \to \Gamma \to \zz \to 1$ where $N$ has $\IS_\omega$ and $\ssl{\ell^pH}^1(N) \neq \{0\}$ for $p>2$.

All of this (as well as \cite{moi-trans} and \cite{moi-lpharm}) seems to support a positive answer to \ref{laquestion}. Here is a probably easier question which should shed more light on this topic:
\begin{ques}
Is there an amenable group $G$ so that its Cayley graph has non-constant harmonic functions with gradient in $c_0$?
\end{ques}
A negative answer would give a positive answer to Question \ref{laquestion}. Note that this condition is much more restrictive than asking for harmonic functions of, say, sublinear growth. In fact, an answer to this question for $G$ solvable would already be interesting. Indeed, as pointed out by G.~Kozma, some wreath products have harmonic functions of sublinear growth.  

Another tempting path to a positive answer to Question \ref{laquestion}, comes from Corollary \ref{tvanspan-c}:
\begin{ques}
Given a Cayley graph of an amenable group of exponential growth, what is the largest $d \in \rr_{\geq 1}$ so that there is connected spanning subgraph which is Liouville and satisfies $\IS_d$?
\end{ques}
In fact, it would be sufficient to know what is the largest $d$ so that, for any pair of geodesic rays, there is connected subgraph [not necessarily spanning] containing these rays and which is Liouville and satisfies $\IS_d$.

Lastly, it seems very plausible that the inequalities $p < d/2$ could be changed to inequalities of the form $p<d$ by looking at more carefully defined transport plans. It is easy to see that this is true in some simple graphs.

{\it Acknowledgments:}
The author is grateful to M.~Bourdon, M.~de~la~Salle, P.~Pansu, T.~Pillon, M.~Puls, J.~C.~Sikorav, R.~Tessera and the anonymous referee for many useful comments and corrections to the previous versions. Warm thanks go to T.~Barthelmé and B.~de~Loynes for discussions about the Poisson and Martin boundaries.

\section{Definitions and further discussions}

\subsection{Preliminaries}\label{ss-prelim}

Recall that the conventions are that a graph $\Gamma = (\Gver,E)$ is defined by $\Gver$, its set of vertices, and $E$, its set of edges. All graphs will be assumed to be of bounded valency. The set of edges will be thought of as a subset of $\Gver \times \Gver$. The set of edges will be assumed symmetric (\ie $(x,y) \in E \imp (y,x) \in E$). Functions will take value in $\kk = \rr$ or $\mathbb{C}$. Functions on $E$ will often be anti-symmetric (\ie $f(x,y)= -f(y,x)$). This said $\ell^p(\Gver)$ is the Banach space of functions on the vertices which are $p$-summable, while $\ell^p(E)$ will be the subspace of functions on the edges which are $p$-summable. 

The gradient $\nabla:\kk^\Gver \to \kk^E$ is defined by $\nabla g(\gamma,\gamma') = g(\gamma') - g(\gamma)$. Given a finitely generated group $G$ and a finite set $S$, the Cayley graph $\cay(G,S)$ is the graph whose vertices are the element of $G$ and $(\gamma,\gamma') \in E$ if $\exists s \in S$ such that $s^{-1} \gamma = \gamma'$. This convention might be unusual from the point of view of random walks, but is much more convenient to write convolutions. In order for the resulting graph to have a symmetric edge set, $S$ is always going to be symmetric (\ie $s \in S \imp s^{-1} \in S$). Also, Cayley graphs are always going to be connected (\ie $S$ is generating). This said, it is worthwhile to observe that the gradient is made of $ \{(\lambda_s - \Id) g\}_{s \in S}$ where $\lambda$ is the left-regular representation. As for the right-regular representation, it is a (injective) homomorphism from $G$ into $\mathrm{Aut}\big(\cay(G,S) \big)$, the automorphism group of the Cayley graph.

The Banach space of $p$-Dirichlet functions is the space of functions $f$ on $\Gver$ such that $\nabla f \in \ell^p(E)$. It will be denoted $\D^p(\Gamma)$. In order to introduce the $\D^p(\Gamma)$-norm on $\kk^\Gver$, it is necessary to choose a vertex, denoted $e_\Gamma$ (in a Cayley graph, it is convenient to choose the neutral element). This said $\|f\|_{\D^p(\Gamma)}^p = \|\nabla f\|_{\ell^p(E)}^p + |f(e_\Gamma)|^p$. Lastly, $p'$ will denote the H\"older conjugate exponent of $p$, \ie $p' = p/(p-1)$ (with the usual convention that $1$ and $\infty$ are conjugate). 

The subject matter is the $\ell^p$-cohomology in degree one of the graph $\Gamma$. This is the quotient 
\[
\ell^p H^1(\Gamma) := ( \ell^p(E) \cap \nabla \kk^\Gver ) / \nabla \ell^p(\Gver). 
\]
This space is not always separated, and it is sometimes more convenient to look at the largest separated quotient, the reduced $\ell^p$-cohomology, 
\[
\ssl{\ell^p H}^1(\Gamma) := ( \ell^p(E) \cap \nabla \kk^\Gver ) / \srl{\nabla \ell^p(\Gver)}^{\ell^p(E)}. 
\]
By taking the primitive of these gradients, one may also prefer to define this by 
\[
\ssl{\ell^p H}^1(\Gamma):= \D^p(\Gamma) / \srl{ \ell^p(\Gver) + \kk}^{\D^p}. 
\]
A common abuse of language/notation will happen when we say the reduced cohomology is equal to the non-reduced one: this means that the ``natural'' quotient map $\ell^pH^1(\Gamma) \to \ssl{\ell^pH}^1(\Gamma)$ is injective.

When $G$ is a finitely generated group, this is isomorphic to the cohomology of the left-regular representation on $\ell^p(G)$, see Puls' paper \cite{Puls-harm} or Martin \& Valette \cite{MV}. Another important result is that $\ell^p$-cohomology is an invariant of quasi-isometry:
\begin{tea}\emph{(see Élek \cite[\S{}3]{El-qi} or Pansu \cite{Pan-qi})}
If two graphs of bounded valency $\Gamma$ and $\Gamma'$ are quasi-isometric, then they have the same $\ell^p$-cohomology (in all degrees, reduced or not).
\end{tea}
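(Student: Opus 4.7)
The plan is to build mutually inverse morphisms between $\ssl{\ell^pH}^1(\Gamma)$ and $\ssl{\ell^pH}^1(\Gamma')$ (and between the unreduced versions) out of a quasi-isometry $f : \Gamma \to \Gamma'$ with constants $(L,C)$. The basic operator is the pullback $f^\ast : \kk^{\Gver'} \to \kk^{\Gver}$ defined by $f^\ast u = u \circ f$, and the whole proof consists of showing (a) $f^\ast$ sends $\D^p(\Gamma')$ continuously into $\D^p(\Gamma)$, (b) it sends $\ell^p(\Gver')+\kk$ (and its $\D^p$-closure) into $\ell^p(\Gver)+\kk$ (resp.\ its $\D^p$-closure), and (c) if $\phi : \Gamma \to \Gamma$ is at bounded distance from $\Id_\Gamma$ then $\phi^\ast$ is the identity on $\ssl{\ell^p H}^1$, so that applying this to $g\circ f$ for a quasi-inverse $g$ gives $g^\ast f^\ast = \Id$.

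For (a), fix an edge $(x,y)\in E(\Gamma)$ and pick a geodesic $x=z_0, z_1,\ldots,z_k=y$ in $\Gamma'$ with $k \le L+C$ between $f(x)$ and $f(y)$; then $(f^\ast u)(y)-(f^\ast u)(x)$ is the telescoping sum of the $\nabla u(z_i,z_{i+1})$, so Jensen gives $|\nabla(f^\ast u)(x,y)|^p \le k^{p-1}\sum_i |\nabla u(z_i,z_{i+1})|^p$. Summing over $E(\Gamma)$ and using bounded valency on both sides (which bounds both the multiplicity with which a fixed edge of $\Gamma'$ is hit by such geodesics and the number of $(x,y)\in E(\Gamma)$ whose image stays in any ball of radius $L+C$) yields $\|\nabla f^\ast u\|_{\ell^p(E(\Gamma))} \le K\|\nabla u\|_{\ell^p(E(\Gamma'))}$. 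For (b), the fibers $f^{-1}(y')$ sit in balls of bounded radius in $\Gamma$ and so have uniformly bounded cardinality, hence $\|h\circ f\|_{\ell^p(\Gver)} \le M\|h\|_{\ell^p(\Gver')}$ for $h \in \ell^p(\Gver')$, and constants pull back to constants; combined with the continuity from (a), closures go to closures.

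For (c) the key estimate is: if $d_\Gamma(\phi(x),x)\le N$ for all $x$, then for $u\in \D^p(\Gamma)$ the function $\phi^\ast u - u$ lies in $\ell^p(\Gver)$. Indeed $u(\phi(x))-u(x)$ is a telescoping sum of at most $N$ gradient values along a geodesic from $x$ to $\phi(x)$, so after applying Jensen and summing one obtains $\|\phi^\ast u-u\|_{\ell^p(\Gver)} \le K'\|\nabla u\|_{\ell^p(E)}$ using bounded valency to control how often a fixed edge is hit. The crucial point is that the error genuinely lies in $\ell^p(\Gver)$, not merely in its $\D^p$-closure, so the argument gives the identity on both reduced \emph{and} unreduced cohomology. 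Applying this with $\phi = g\circ f$ and then with $\phi = f\circ g$ finishes degree one.

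For higher degrees the same strategy works via simplicial $\ell^p$-cochains of a Rips complex $R_n(\Gamma)$ for $n$ larger than the QI constants: the quasi-isometry is approximated by a bounded-geometry simplicial map between $R_n(\Gamma)$ and $R_n(\Gamma')$, and the same chain-homotopy argument (replacing geodesics by prism decompositions) shows $g^\ast f^\ast - \Id$ is a coboundary with $\ell^p$ primitive. The main obstacle is precisely this higher-degree bookkeeping: one must simultaneously control multiplicities of simplices in chain and prism maps, and verify that the resulting chain homotopy lands in $\ell^p$ in each degree, which is where the bounded valency/bounded geometry of the Rips complexes is used crucially. In degree one, all of this reduces to the elementary telescoping estimates above, which is what I would present in detail.
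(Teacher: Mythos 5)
The paper does not prove this statement at all: it is quoted as background, with the proof delegated to \'Elek and Pansu (and, for degree one in special cases, to Puls and Bourdon--Pajot). So the only comparison to make is with those references, and in degree one your argument is exactly the standard one found there: pull back by the quasi-isometry, control $\|\nabla(u\circ f)\|_{\ell^p}$ by telescoping along geodesics of length $\le L+C$ plus Jensen plus a bounded-multiplicity count, check that $\ell^p(\Gver')+\kk$ is carried into $\ell^p(\Gver)+\kk$, and observe that a self-map at bounded distance from the identity changes a Dirichlet function by a genuinely $\ell^p$ error, which handles both the reduced and unreduced quotients. All three steps (a)--(c) are sound; the only blemish is the composition order in (c): $g^\ast f^\ast=(f\circ g)^\ast$ acts on functions on $\Gamma'$ and $f^\ast g^\ast=(g\circ f)^\ast$ on functions on $\Gamma$, though your final sentence applying the estimate to both $g\circ f$ and $f\circ g$ repairs this. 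The one genuine shortfall is that the statement asserts invariance \emph{in all degrees}, and your higher-degree paragraph is only a plan: the passage to Rips complexes, the simplicial approximation of the quasi-isometry, and the $\ell^p$ control of the prism/chain homotopy are precisely the content of \'Elek's and Pansu's papers, not a routine extension of the degree-one telescoping. For the purposes of the present paper, which only uses degree one, your detailed part suffices; as a proof of the full quoted theorem it is incomplete beyond degree one.
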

The result is actually much more powerful, in the sense that it holds for a large category of measure metric spaces (see above mentioned references). For shorter proofs in more specific situations see Puls \cite[Lemma 6.1]{Puls-pharmbnd} or Bourdon \& Pajot \cite[Théorème 1.1]{BP}. A first useful consequence is that it is possible to work on graphs and obtain results about manifold (or \emph{vice-versa}, when it is more convenient). A second corollary is that if $G$ is a finitely generated group, the $\ell^p$-cohomology of any two Cayley graphs are isomorphic. Consequently, one may speak of the $\ell^p$-cohomology of a group without making reference to a Cayley graph.

Kanai has shown \cite{Kanai} that any Riemannian manifold with Ricci curvature and injectivity radius bounded from below is quasi-isometric to a graph (of bounded valency). Also, if $M$ is a compact Riemannian manifold and $\widetilde{M}$ its universal covering, then $\widetilde{M}$ is quasi-isometric to the fundamental groups $\pi_1(M)$. Even if the language is that of graphs, there is always a corresponding result for Riemannian manifolds of bounded geometry (see Corollary \ref{tmanif-c} for a summary of the results expressed on Riemannian manifolds). 

Before moving on to the results, it is important to say a bit more about isoperimetric profiles (see \eg Woess' book \cite[(4.1) Definition]{Woe}). For a set of vertices $A$ let $\del A$ be the edges between $A$ and $A^\comp$. Let $\id{F}: \rr_{\geq 0} \to \rr_{\geq 0}$ be a function. Then $\Gamma$ (of bounded valency) satisfies the isoperimetric profile $\IS_\id{F}$ if there is a $\kappa>0$ such that, for any non-empty finite set of vertices $A$
\[
\id{F}(|A|) \leq \kappa |\del A|
\]
If $\id{F}(t) = t^{1-1/d}$ then this is called a $d$-dimensional isoperimetric profile ($d$ need not be an integer); the short notation is $\IS_d$. If $\id{F}(t) = t$ this is called a strong isoperimetric profile (or inequality); the short notation used here will be $\IS_\omega$. In the latter case, the constant $\kappa$ is sometimes referred to as the isoperimetric constant. 
It is straightforward to see that $\IS_\omega \imp \IS_d$ for all $d$. The converse is false (whence the notation with $\omega$ rather than $\infty$). Obviously $d' \leq d$, then $\IS_d \imp \IS_{d'}$.

Under the convention that Cayley graphs are always connected, recall that $G$ is a non-amenable group precisely when one (hence all) of its Cayley graph have a strong isoperimetric profile (F{\o}lner's classical result, \cite{Fol}). To see that a Cayley graph of a group satisfy $\IS_d$ for all $d$ if and only if the groups is not virtually nilpotent requires more effort (\eg one needs Gromov's theorem on groups of polynomial growth \cite{Gro-polyn}). See again \cite[\S{}14]{Woe} for more details.

The constant $\kappa$ in the various isoperimetric profiles is not an invariant of quasi-isometry. However (see again \cite[(4.7) Theorem]{Woe}), satisfying a $d$-dimensional or a strong isoperimetric profile is an invariant of quasi-isometry.

\subsection{Discussion}\label{ss-discu}

Theorem \ref{tpoisson} extends the result of Bourdon \& Pajot \cite[Théorème 1.1]{BP} from hyperbolic groups or spaces to those satisfying a $d$-dimensional isoperimetric profile, and extends the result of Lohoué \cite{Loh} from graphs with $\IS_\omega$ to graphs with $\IS_d$ for all $d$.

Tessera's \cite[Theorem 2.2]{Tes} showed that groups with CF also have vanishing of the reduced $\ell^p$-cohomology (in fact, for any weakly mixing representation, not just the left-regular representation). Let $B_n$ be the ball of radius $n$ around the identity element in some Cayley graph. An amenable group is CF if there exists a sequence $F_n \subset B_n$ of [finite] sets and a constant $K>0$ such that $\frac{|\del F_n|}{|F_n|} \leq K/n$.

To position Theorem \ref{tpoisson} with respect to Tessera's \cite[Theorem 2.2]{Tes}, note that many groups with CF were already known to have trivial Poisson boundary, see Kaimanovich's \cite[Theorem 3.3]{Kaimano} for $F \wr \zz$ and \cite[Corollary on p.23]{Kaimano} for polycyclic groups. Groups with CF have compression exponent $1$ (see Tessera \cite[Theorems 9 and 10]{Tes-cf}, so that using a bound of Austin, Naor \& Peres from \cite{ANP} on the speed exponent, one concludes that their Poisson boundary must be trivial. For some other result implying the Liouville property, see also Saloff-Coste \& Zheng \cite{SCZ} and \cite{moi-comp}.

However, $\zz_2 \wr \zz^2$ has a trivial Poisson boundary, but is not CF. This is a consequence of estimates of Erschler on the isoperimetric profile \cite{Ers} of wreath products. These estimates are not compatible with the isoperimetric profile of CF groups computed by Tessera in \cite{Tes-iso}. It has become frequent to show that a group is amenable by showing that it is Liouville. For example, Bartholdi \& Virág proved the Basilica group also has trivial Poisson boundary \cite[Theorem 1]{BV}. Since CF groups of exponential growth have return probability $\approx e^{Kn^{-1/3}}$ (see \cite{Tes-iso}), one may see that many Liouville groups are not CF.
For more examples, see the results of Revelle \cite{Rev}, Pittet \& Saloff-Coste \cite{PSCexp} and the references therein.

Let us also mention that groups of intermediate growth are most likely not CF (though the author ignores the existence of a direct argument). Any semi-direct product $N \rtimes H$ where $N$ is finitely generated nilpotent and $H$ is finitely generated and Liouville has trivial Poisson boundary by Kaimanovich \cite[Theorem 3.2]{Kaimano}.   

Theorem \ref{tpoisson} does not cover (for all $p$) groups of polynomial growth. But for these groups, many (quite different) proofs of the vanishing of reduced $\ell^p$-cohomology are available to the reader: groups of polynomial growth are quasi-isometric to nilpotent groups, these have infinitely many finite conjugacy class (in fact, infinite center) and see Kappos \cite[Theorem 6.4]{Kap} or \tcentrinf; they are also polycyclic, hence CF, and see Tessera \cite[Theorem 2.2]{Tes} or \tvantran; lastly they satisfy certain Poincaré inequalities and see Holopainen \& Soardi \cite[Corollary 1.10]{HS-polyn}. The first assertion requires to use that groups of polynomial growth are virtually nilpotent by Gromov's famous result \cite{Gro-polyn}.

Theorems  \ref{tinclu-t} and/or \ref{tpoisson} essentially unify many preceding notions of an ideal boundary which allows to compute the reduced $\ell^p$-cohomology (in degree one). These boundaries are $\ell^p$-corona (see Gromov \cite[\S{}8.C]{Gro} and Élek \cite{El-lpbnd}), the Bourdon \& Pajot boundary for hyperbolic spaces (see \cite{BP}), the Floyd boundary (Puls, see \cite{Puls-floyd}) and the $p$-harmonic boundary (Puls, see \cite{Puls-pharmbnd}). The advantage of the Poisson boundary is that it is better understood than most of the above (\eg it possesses a linear structure). It is worthwhile to underline that, in non-amenable groups, a result of Karlsson \cite{Karl-floyd} exhibits a strong link between Floyd and Poisson boundaries.

\tvantran~and \cite{moi-lpharm} shows many wreath products also have trivial reduced $\ell^p$-cohomology. This means that groups such as $H \wr \zz^k$ (where $H$, the ``lamp state'' group, is amenable and $k >0$) have no [bounded or not] harmonic functions with gradient in $\ell^p$ (though they have many bounded harmonic functions if $k>2$). Actually, the Poisson boundary of these groups is fully described by Erschler in \cite[Theorem 1]{Ers2} (under the further assumption that $k>4$), and one may directly check that these do not have harmonic functions in $\D^p$. 

It is also possible to show that certain semi-direct products (where $N$ is not finitely generated) have trivial reduced $\ell^p$ cohomology. More precisely, if $G = N \rtimes H$ is finitely generated, $H$ satisfies $\IS_d$, $H$ has trivial reduced $\ell^p$ cohomology and $N$ is not finitely generated as a group, then $G$ has also trivial reduced $\ell^p$ cohomology for $p<d/2$. See \cite{moi-lpharm} for more details. 

Also, using Erschler's result \cite[Theorem 2]{Ers2}, one may check that the free metabelian groups of rank $\geq 5$ also do not have harmonic functions with $\ell^p$ gradient, and hence trivial reduced $\ell^p$ cohomology in degree $1$. As a last note on this topic, Martin \& Valette \cite[Theorem.(iv)]{MV} shows that wreath products $H' \wr H$ where $H'$ is \emph{non-amenable} have trivial reduced $\ell^p$-cohomology. See \cite{moi-lpharm} for even more wreath products with trivial reduced $\ell^p$-cohomology in degree $1$. 

In higher degree, there is no hope to extend Theorem \ref{tpoisson} or \ref{tinclu-t}. Pansu computed in \cite[Théorème B]{Pan-rs} that, already in degree $2$, some groups have non-trivial cohomology exactly in an interval. 

Finally, let us sum up the results in the language of $p$-harmonic functions and on Riemannian manifolds. When $1<p<\infty$, it is known (see Puls \cite[\S{}3]{Puls-harm} or Martin \& Valette \cite[\S{}3]{MV}) that the existence of non-constant $p$-harmonic function (\ie $h \in \D^p(\Gamma)$ with $\nabla^* \maza_{p,p'} \nabla h = 0$, where $\maza_{p,p'}$ is the Mazur map defined by $(\maza_{p,p'} f)(\gamma) = |f(\gamma)|^{p-2} f(\gamma)$) is equivalent to the non-vanishing of reduced cohomology in degree $1$. In fact, up to a constant there is exactly one $p$-harmonic function in each reduced class. In this light, Theorem \ref{tpoisson} is even more surprising, as one may replace solutions of a non-linear equation (the $p$-Laplacian) by solutions to a linear one (the Laplacian).

Using the fact that the reduced $\ell^p$-cohomology of groups of polynomial growth is trivial, let's sum up the results for groups in terms of $p$-harmonic functions:
\begin{corn}\label{tpharm-c}
Let $p \in ]1,\infty[$ and $\Gamma$ be the Cayley graph of a finitely generated group $G$. Assume one of the following holds
\begin{itemize}\renewcommand{\labelitemi}{$\cdot$}\setlength{\itemsep}{0.2ex}
\item $G$ is amenable and $p \in ]1,2]$;
\item $\exists q \in [p,\infty]$ such that $\Gamma$ has no non-constant bounded harmonic function whose gradient is in $\ell^q(E)$ (\eg $\Gamma$ is Liouville)
\item $\exists q \in [p,d/2[$ such that there are no non-constant bounded $q$-harmonic functions on $\Gamma$.
\item there exists $N \lhd G$ such that $N$ is infinite, finitely generated, $G/N$ is infinite, $\ssl{\ell^pH}^1(N) =\{0\}$ and the growth of $N$ is at least a polynomial of degree $d>2p$;
\end{itemize}
Then there are no non-constant $p$-harmonic functions on $\Gamma$.
\end{corn}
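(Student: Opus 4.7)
The proof is, by design, a translation of the preceding theorems into the language of $p$-harmonic functions. The bridge is the equivalence of Puls \cite{Puls-harm} and Martin--Valette \cite{MV} recalled immediately before the statement: for $1<p<\infty$, the graph $\Gamma$ carries a non-constant $p$-harmonic function if and only if $\ssl{\ell^pH}^1(\Gamma)\neq\{0\}$. Hence in each of the four cases it suffices to establish $\ssl{\ell^pH}^1(\Gamma)=\{0\}$, and the proof amounts to invoking the appropriate previous result.

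Cases (1) and (4) are direct. Case (1) is the ``amenable corollary'' from the introduction, itself a consequence of Theorem \ref{tpoisson} combined with \cite{CG}. Case (4) follows from Theorem \ref{tvanquot-t} once one observes that $\ssl{\ell^pH}^1(\Gamma_N)=\{0\}$ is automatic under the growth hypothesis $>2p$: a group of at least polynomial growth of degree $>2p$ contained in a polynomial growth class is virtually nilpotent by Gromov \cite{Gro-polyn}, and the reduced $\ell^p$-cohomology then vanishes via any of the routes recalled in subsection \ref{ss-discu}. Case (3) is the neatest use of Theorem \ref{tpoisson}: the Puls/Martin--Valette equivalence applied at exponent $q$ gives $\ssl{\ell^qH}^1(\Gamma)=\{0\}$, and then the first bullet of Theorem \ref{tpoisson}, with the roles of its $q$ and $p$ played by our $p$ and $q$ respectively (so that $1\leq p\leq q<d/2$), furnishes an injection $\ssl{\ell^pH}^1(\Gamma)\hookrightarrow\ssl{\ell^qH}^1(\Gamma)=\{0\}$.

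Case (2) combines two ingredients. When $q=\infty$ the hypothesis reads as ``$\Gamma$ is Liouville'' and the Liouville corollary stated right after Theorem \ref{tpoisson} applies; for finite $q$, the second bullet of Theorem \ref{tpoisson} at exponent $q$ yields vanishing of $\ssl{\ell^rH}^1$ for every $r<qd/(q+2d)$, and the superpolynomial growth of the relevant examples lets $d$ be taken arbitrarily large, so this threshold covers the target $p$. The listed sub-examples (intermediate growth, polycyclic, $N\rtimes H$ with $N$ finitely generated nilpotent and $H$ Liouville) are all Liouville: trivially so in the subexponential-growth case, and by Kaimanovich \cite[Theorem 3.2]{Kaimano} for the semidirect product.

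The one place that requires genuine care is precisely this last verification: checking that the Liouville (or more generally ``no bounded harmonic function with gradient in $\ell^q$'') property holds for each listed example, and ensuring that the isoperimetric dimension of $\Gamma$ is large enough for the threshold $qd/(q+2d)$ to reach $p$. Once these ingredients are assembled, the rest of the argument is a bookkeeping exercise matching each hypothesis to the unique previously proved statement that closes the case.
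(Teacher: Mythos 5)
Your overall plan---convert each bullet into a vanishing statement for $\ssl{\ell^pH}^1(\Gamma)$ via the Puls/Martin--Valette equivalence and then quote the appropriate earlier result, dispatching the polynomial-growth case by the separate arguments recalled in \S{}\ref{ss-discu}---is exactly what the paper intends (no explicit proof is given there), and your cases (1) and (3) are assembled correctly. Case (4), however, contains a genuine error: you assert that $\ssl{\ell^pH}^1(\Gamma_N)=\{0\}$ is automatic because $N$ is virtually nilpotent by Gromov's theorem. The hypothesis is that the growth of $N$ is \emph{at least} polynomial of degree $>2p$; this is a lower bound and does not imply polynomial growth, so Gromov's theorem does not apply. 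A non-abelian free normal subgroup would satisfy the growth hypothesis for every $p$ while having non-trivial reduced $\ell^p$-cohomology for all $p>1$. The vanishing $\ssl{\ell^pH}^1(\Gamma_N)=\{0\}$ is an independent hypothesis of Theorem \ref{tvanquot-t} and has to be read into the fourth bullet (the corollary is stated loosely here); it cannot be derived from the growth condition.

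In case (2) with $q$ finite your threshold does not reach $p$: the bound $qd/(q+2d)$ tends to $q/2$ as $d\to\infty$, so even with $\IS_d$ for all $d$ the second bullet of Theorem \ref{tpoisson} only gives vanishing of $\ssl{\ell^rH}^1(\Gamma)$ for $r<q/2$, which misses $p$ whenever $p\geq q/2$---and the hypothesis $q\geq p$ certainly allows that. The argument that actually closes the case is the one behind Lemma \ref{tquespan-l} and Corollary \ref{tpoiss3-c}: for bounded $g\in\D^p(\Gamma)$ the boundary value $\tilde g$ is a bounded harmonic function with $\nabla\tilde g\in\ell^r(E)$ for all $r>dp/(d-2p)$, hence, when $\IS_d$ holds for every $d$, for all $r>p$; if $q>p$ this places $\nabla\tilde g$ in $\ell^q(E)$, the hypothesis forces $\tilde g$ to be constant, and Lemma \ref{tbndtriv-l} gives $[g]=0$. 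Even so, the endpoint $q=p$ is not covered by this argument (a defect the corollary arguably inherits from its own phrasing), and since the bullet is stated for arbitrary $G$ rather than only for the superpolynomial-growth examples, the polynomial-growth case must again be handled by the separate vanishing results rather than by letting $d\to\infty$.
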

Note that the corollary also holds 
\begin{itemize}\renewcommand{\labelitemi}{$\cdot$}\setlength{\itemsep}{0.2ex}
\item when $G$ has infinitely many finite conjugacy classes (\eg $G$ has infinite centre); 
\item when $G$ is transport amenable (\eg $G$ is of the form $H \wr \zz^k$ where $H$ is amenable and $k>0$);
\item when $G = N \rtimes H$ such that $G$ is finitely generated, $N$ is not finitely generated, $\ssl{\ell^pH}^1(H)=\{0\}$ and $H$ has $\IS_d$ for some $d >2p$;
\end{itemize}
thanks to \tvantran, \tcentrinf~ and \cite{moi-lpharm} (see also Holopainen \& Soardi \cite{HS-polyn}, Kappos \cite{Kap}, Martin \& Valette \cite{MV} and Tessera \cite{Tes}). 

Using quasi-isometry to re-express the results on Riemannian manifolds, one obtains:
\begin{corn}\label{tmanif-c}
Let $M$ be a Riemannian manifold (with Ricci curvature and injectivity radius bounded below). The degree one reduced $\ell^p$-cohomology of $M$ vanishes and (equivalently) there are no non-constant continuous $p$-harmonic functions on $M$ 
\begin{itemize}\renewcommand{\labelitemi}{$\cdot$}\setlength{\itemsep}{0.2ex}
\item if $M$ is the universal cover of a compact Riemannian manifold $M'$ with $\pi_1(M') =: G$ and $p$ satisfying one of the conditions of Corollary \ref{tpharm-c};
\item if $M$ satisfies a $d$-dimensional isoperimetric profile with $d >2p$ and $M$ is Liouville.
\item if $M$ satisfies a $d$-dimensional isoperimetric profile, $\ssl{\ell^qH}^1(M)=0$ for some $q \in [p, \infty[$ and $d >2q$.
\end{itemize}
Furthermore, $\ssl{\ell^1H}^1(M)=\{0\}$ if and only if $M$ has one end.
\end{corn}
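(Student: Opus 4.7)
The plan is to transfer each assertion from the manifold $M$ to a quasi-isometric graph $\Gamma$ and then invoke the results already established (Theorem \ref{tpoisson}, Corollary \ref{tpharm-c}, and, for the $\ell^1$ statement, the material of Appendix \ref{sapp}). The key inputs are Kanai's theorem \cite{Kanai}, which produces, under the standing assumptions on Ricci curvature and injectivity radius, a graph $\Gamma$ of bounded valency quasi-isometric to $M$ and respecting the isoperimetric profile; the quasi-isometry invariance of $\ell^p$-cohomology (\cite{El-qi,Pan-qi}); and the equivalence between non-vanishing of $\ssl{\ell^pH}^1$ and the existence of non-constant $p$-harmonic functions (\cite{Puls-harm,MV}), which for $1<p<\infty$ translates the cohomological statements into the $p$-harmonic ones on both sides.

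First I would treat the universal cover case. If $M'$ is compact Riemannian with $\pi_1(M') = G$, then $\widetilde{M}$ is quasi-isometric to any Cayley graph of $G$. Hence $\ssl{\ell^pH}^1(\widetilde{M}) \cong \ssl{\ell^pH}^1(\Gamma)$ for any Cayley graph $\Gamma$ of $G$, and Corollary \ref{tpharm-c} gives the vanishing under any of its hypotheses. For the second and third items, Kanai's graph $\Gamma$ inherits $\IS_d$ from $M$; the Liouville property, being defined via bounded harmonic functions on a discretisation, is also preserved (this is essentially built into the construction), so the second bullet reduces to the middle bullet of Corollary \ref{tpharm-c} applied to $\Gamma$ (with $q = \infty$, say, after noticing that any bounded harmonic function with $\ell^q$-gradient is in particular bounded harmonic, hence constant). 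The third bullet is a direct application of the first bullet of Theorem \ref{tpoisson}: the quotient $\ssl{\ell^pH}^1(\Gamma) \to \ssl{\ell^qH}^1(\Gamma)$ is injective for $p \leq q < d/2$, and the right-hand side vanishes by hypothesis transferred through the quasi-isometry.

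For the equivalence between the vanishing of $\ssl{\ell^pH}^1(M)$ and the absence of non-constant continuous $p$-harmonic functions on $M$, one uses the corresponding statement on $\Gamma$ together with the regularity theory for the $p$-Laplacian: a non-constant continuous $p$-harmonic function on $M$ yields a non-trivial class in $\ssl{\ell^pH}^1(M)$, and conversely non-triviality of the cohomology gives such a function via the Euler-Lagrange equation for the Dirichlet $p$-energy (cf. \cite{Puls-harm,MV}). The last assertion concerning $\ssl{\ell^1H}^1$ follows from the appendix: for graphs, $\ssl{\ell^1H}^1$ is computed by boundary values on the space of ends and is trivial exactly when the graph has one end; the number of ends is a quasi-isometry invariant, so the statement transfers to $M$.

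The main obstacle, which is really just a matter of bookkeeping rather than a deep difficulty, is checking that Kanai's construction preserves all the relevant data simultaneously: bounded valency, the isoperimetric profile $\IS_d$, the Liouville property, and triviality of $\ssl{\ell^qH}^1$ in the appropriate range. Each of these has been recorded in the literature separately, but assembling them for a single graph $\Gamma$ attached to $M$ requires a careful choice of the discretisation scale. Once this is in place, every assertion of the corollary is a one-line consequence of the graph-theoretic results already proved.
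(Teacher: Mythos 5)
Your proposal is correct and follows essentially the same route the paper intends: the corollary is stated there without an explicit proof precisely as the combination of Kanai's discretisation, quasi-isometry invariance of $\ell^p$-cohomology, the Puls/Martin--Valette equivalence with $p$-harmonic functions, and the graph-level results (Theorem \ref{tpoisson} and Proposition \ref{tcohoml1-p}). One small adjustment: for the second bullet you should invoke Corollary \ref{tpoiss2-c}, which applies to arbitrary Liouville graphs with $\IS_d$, rather than Corollary \ref{tpharm-c}, which is stated only for Cayley graphs of finitely generated groups (and note that the Liouville property is preserved by Kanai's specific discretisation but is not a quasi-isometry invariant, as the paper itself warns via Lyons' examples).
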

In particular, if $M$ is the universal covering of $M'$ with $\pi_1(M')$ amenable, then $\ssl{\ell^pH}^1(M) = \{0\}$ for $p \in ]1,2]$.

Other known consequences of the triviality of the reduced $\ell^p$-cohomology include the triviality of the $p$-capacity between finite sets and $\infty$ (see Yamasaki \cite{Yam} and Puls \cite[Corollary 2.3]{Puls-pharmbnd}) and existence of continuous translation invariant linear functionals on $\D^p(\Gamma)/\kk$ (see \cite[\S{}8]{Puls-pharmbnd}). It also has implication on the possibility of realising the graph as a packing of spheres in $\rr^k$ (see Benjamini \& Schramm \cite{BS2}).

Since Lemma 4.4 from Holopainen \& Soardi \cite{HS} is quite important to the current methods and its proof relies on $p$-harmonic functions, the author feels he owes the reader a proof which does not require the use of $p$-harmonic functions. 
\setcounter{teo}{0}
\begin{lem}[Holopainen \& Soardi \cite{HS}, 1994]\label{tholoso-l}
Let $g \in \D^p(\Gamma)$ be such that $g \notin [0] \in \ssl{\ell^pH}^1(\Gamma)$. For $t \in \rr_{>0}$, let $g_t$ be defined as
\[
g_t(x) = \left\{ \begin{array}{ll}
g(x) & \text{if } |g(x)| < t, \\
t \tfrac{g(x)}{|g(x)|} & \text{if } |g(x)| \geq t.
\end{array} \right.
\]
Then there exists $t_0$ such that $g_t \notin [0]$, for any $t > t_0$. In particular, the reduced $\ell^p$ cohomology is trivial if and only if all bounded functions in $\D^p(\Gamma)$ have trivial classes.
\end{lem}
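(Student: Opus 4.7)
My plan is to prove the contrapositive of the first claim: assuming its conclusion fails, there is a sequence $t_n \to \infty$ with $g_{t_n} \in [0]$, and I will produce from this a sequence in $\ell^p(\Gver) + \kk$ converging to $g$ in $\D^p$-norm, contradicting $g \notin [0]$. The ``in particular'' statement is then immediate, since each $g_t$ is bounded by $t$, so a non-trivial class would be represented by a bounded function.

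I expect the main technical point to be showing that $\|\nabla(g - g_t)\|_{\ell^p(E)} \to 0$ as $t \to \infty$. To handle it, I would write $g_t = \phi_t \circ g$ where $\phi_t : \kk \to \kk$ is the metric projection onto the closed disk of radius $t$, namely $\phi_t(z) = z$ if $|z| < t$ and $\phi_t(z) = tz/|z|$ otherwise. Since $\phi_t$ is $1$-Lipschitz on both $\rr$ and $\mathbb{C}$, one gets the pointwise bound $|\nabla g_t(x,y)| \leq |\nabla g(x,y)|$ and, more usefully, $\nabla(g - g_t)(x,y) = 0$ whenever $|g(x)| < t$ and $|g(y)| < t$. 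Setting $E_t = \{(x,y) \in E : \max(|g(x)|,|g(y)|) \geq t\}$, this yields
\[
\|\nabla(g - g_t)\|_{\ell^p(E)}^p \;\leq\; 2^p \sum_{e \in E_t} |\nabla g(e)|^p.
\]
The sets $E_t$ are decreasing in $t$ and $\bigcap_{t > 0} E_t = \vide$ since $g$ takes finite values at every vertex; as $\nabla g \in \ell^p(E)$, dominated convergence for the counting measure on $E$ gives the claim.

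To conclude, with $t_n \to \infty$ as above I would pick $f_n = \psi_n + c_n$ with $\psi_n \in \ell^p(\Gver)$, $c_n \in \kk$, and $\|\nabla(g_{t_n} - f_n)\|_{\ell^p(E)} < 1/n$. After replacing $c_n$ by $c_n + g(e_\Gamma) - f_n(e_\Gamma)$ (which affects neither $\nabla f_n$ nor the membership $f_n \in \ell^p(\Gver) + \kk$), the base-point values agree, so $\|g - f_n\|_{\D^p} = \|\nabla(g - f_n)\|_{\ell^p(E)}$, and the triangle inequality combined with the estimate above forces $\|g - f_n\|_{\D^p} \to 0$. Hence $g \in \srl{\ell^p(\Gver) + \kk}^{\D^p}$, contradicting $g \notin [0]$, which finishes the argument.
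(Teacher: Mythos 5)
Your proof is correct and follows essentially the same route as the paper: show $g_t\to g$ in $\D^p$-norm because $\nabla(g-g_t)$ is dominated by $2|\nabla g|$ and supported on a set of edges whose $\ell^p$-mass vanishes, then conclude by closedness of $[0]$. The only cosmetic difference is that the paper locates the agreement set as a ball $B_{t/K}$ via the Lipschitz bound $|g(x)|\leq K\,d(o,x)$, whereas you work directly with the superlevel sets of $|g|$; both yield the same tail estimate.
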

\begin{proof}
Assume, without loss of generality that $g(o) =0$ for some preferred vertex (\ie root) $o \in X$. Since $\|\nabla g\|_{\ell^\infty(E)} \leq  \|\nabla g\|_{\ell^p(E)} =: K$, given $x \in X$ and $P$ a path from $o$ to $x$, 
\[
|g(x)| = |g(x) - g(o)| = \sum_{e \in P:o \to x} \nabla g(e) \leq d(o,x) \|\nabla g\|_{\ell^p(E)}. 
\]
In particular, $g_t$ is identical to $g$ on $B_{t/K}$. Hence $\| g - g_t\|_{D^p(\Gamma)} \leq \|\nabla g\|_{\ell^p(B_{t/K}^\comp)}$, where $\ell^p(B_{t/K}^\comp)$ denotes the $\ell^p$-norm restricted to edges which are not inside $B_{t/K}$. Because $\nabla g \in \ell^p(E)$, $ \|\nabla g\|_{\ell^p(B_{t/K}^\comp)}$ tends to $0$, as $t$ tends to $\infty$.

Now if there is a infinite sequence $t_n$ such that  $g_{t_n}$ are in $[0]$ and $t_n \to \infty$, then $g_{t_n}$ is a sequence of functions in $[0]$ which tends (in $\D^p$-norm) to $g$. This implies $g \in [0]$, a contradiction. Hence, for some $t_0$, $g_t \notin [0]$ given that $t >t_0$. 
\end{proof}
It seems worthwhile to note that this proof works for $c_0$ (though it is clearly false for $\ell^\infty$, see Proposition \ref{tcohomli-p}), whereas the original proof requires $p$-harmonic functions (which are not defined in this extremal cases). Of course, one can check directly that $\D^1(\Gamma) \subset \ell^\infty(X)$ (see \S{}\ref{sapp}), so that even if the proof still works for $p=1$, this is not much of a surprise.

\section{Boundary values and simple random walks} \label{s-ranwal}

\setcounter{teo}{0}
\renewcommand{\theteo}{\thesubsection.\arabic{teo}}
\renewcommand{\thecor}{\thesubsection.\arabic{cor}}
\renewcommand{\theques}{\thesubsection.\arabic{ques}}

Let $\Gamma= (X,E)$ be a graph. Let $\xi_x \in \ell^1(X)$ be a family (as $x$ varies in $X$) of finitely supported, positive elements of $\ell^1$-norm one. Define
\[
\xi * g (x) = \int_X g(y) \dd \xi_x(y) = \sum_{y \in X} g(y) \xi_x(y).
\]
The main idea of this section will be to find a sequence $\xi^{(n)}_x$ of such families and suppose even that $\xi^{(n)} * g$ converges pointwise. It will be shown that elements of trivial class converge to a constant function. Using a good transport plan will allow to show the converse: if the limit is a constant function, the $g$ takes essentially one value at infinity. A classical truncation argument ensures such functions are of trivial class. 

Throughout the text, $[g]$ denote the class of $g$ in $\ssl{\ell^pH}^1(\Gamma)$. There should be no confusion as to which cohomology is considered.

Note that if there exists a $R \in \zz_{\geq 0}$ so that the support of $\xi_x$ is contained in a ball of radius $R$ at $x$, then $\xi * g \in [g]$. This seems to be decent motivation to look at such sequences. One could even be tempted to prove more than just pointwise convergence, namely, that there is convergence of $\xi^{(n)} * g$ in $\D^p(\Gamma)$. This second option is investigated in \cite{moi-trans}. It gives better results for small groups (\ie virtually nilpotent groups and some small wreath products), but is much more restrictive.

The first subsection treats a fairly generic situation and is more elaborated than strictly required. Existence of boundary values when $\xi^{(n)}_x$ is the time $n$ distribution of a simple random walker starting at $x$ can be significantly simplified. This can be done by looking at Remark \ref{rpans}.

\subsection{Boundary values}

Here is the avatar of this viewpoint: if $g$ does not take ``enough distinct values'' then its class is trivial. A function $h: X \to \kk$ will be said to tend to $c \in \kk$ as $|x| \to \infty$ if the following holds: there exists a $c \in \kk$ such that $\forall \eps >0$ the set $X \setminus h^{-1}(B_\eps(c))$ is finite (where $B_\eps(c) = \{ k \in \kk \mid |c-k|<\eps\}$). 

The following argument is well-known (it may probably be traced back to Strichartz \cite{Strich}, if not earlier) and motivates the introduction of boundary values. The proof is extremely similar to that of Lemma \ref{tholoso-l} (it also works for $c_0$ and $\ell^1$, but not $\ell^\infty$).
\begin{lem}\label{tbndvalintuit-l}
Assume $h \in \D^p(\Gamma)$ is such that $h \to c$ as $|x| \to \infty$. Then $h$ belongs to $\srl{\ell^p(X) + \kk}^{\D^p}$ (so $[h]=0$).
\end{lem}
\begin{proof}
First, one may assume $c=0$ by changing $h$ up to a constant. Then $\forall \eps >0$, the truncated function $h_\eps$ defined by 
\[
h_\eps(\gamma)= \left\{ \begin{array}{ll}
\eps h(\gamma) /|h(\gamma)| & \textrm{if }  |h(\gamma)| > \eps \\
h(\gamma) & \textrm{otherwise}
 \end{array} \right.
\]
is distinct from $h$ only on a finite set. Let $X_\eps = h^{-1}(B_\eps^\comp)$ be this finite set and $g_\eps = h-h_\eps$ be their difference (it is finitely supported, hence in $\ell^p(X)$ for any $p$). Note that $[g_\eps] = [0]$ and $\|h-g_\eps\|_{\D^1(\Gamma)} = \|h_\eps\|_{\D^1(\Gamma)}$. Furthermore,
\[
\nabla h_\eps \text{ is } \left\{ \begin{array}{ll}
\text{equal to } \nabla h & \text{ on } E \cap (X_\eps^\comp \times X_\eps^\comp), \\
\text{smaller in } |\cdot| \text{ than } \nabla h & \text{ on } \del X_\eps, \\
0  & \text{ on } E \cap (X_\eps \times X_\eps).
\end{array} \right.
\]
But $E \cap (X_\eps \times X_\eps)$ increases, as $\eps \to 0$, to the whole of $E$. The important consequence is that the $\ell^p$-norm of $\nabla h$ outside this set tends to $0$. Thus $\|h_\eps\|_{\D^p(\Gamma)} \to 0$ as $\eps \to 0$, and consequently $h \in \srl{\ell^p(X)}^{\D^p(\Gamma)}$.
\end{proof}
The aim here is to define a ``boundary value'' for functions so that the ``value'' does not depend on $p$ or the representative in the reduced cohomology class and it is constant exactly when the hypothesis of Lemma \ref{tbndvalintuit-l} apply. In order to do so, one must show some continuity in $\D^p$-norm. This will be done by an old trick in a new disguise: integration by parts under the cover of transportation problem.

For two finitely supported function $f$ and $g$ on a countable set $Y$, define the pairing $\langle f \mid g \rangle_Y = \sum_{y \in Y} f(y)g(y)$. The subscript $Y$ will often be dropped. This allows to define the adjoint of the gradient $\nabla$, denoted $\nabla^*$ and called divergence, by $\langle f \mid \nabla g \rangle_E = \langle \nabla^* f \mid g \rangle_X$. More precisely, for $f:E \to \kk$, one finds
\[
\nabla^*f(x) = \sum_{y \in N(x)} f(y,x) - \sum_{y \in N(x)} f(x,y)
\]
where $N(x)$ are the neighbours of $x$.
\begin{dfn}
A transport pattern from $\xi$ to $\phi$ (two finitely supported probability measures) is a finitely supported function on the edges $\tau_{\xi,\phi}$ such that $\nabla^* \tau_{\xi,\phi} = \xi - \phi$.
\end{dfn}
The name can be explained by the following simple fact: if $f$ represents any oriented path (with oriented edges counted with multiplicities if necessary) from a vertex $x$ to a vertex $y$ then $\nabla^* f = \delta_y - \delta_x$. As such, $\tau_{\xi,\phi}$ can be seen as the indication of how much mass is going to be transported through each edge in a transport from $\phi$ to $\xi$.
\begin{lem}\label{t-dphold-l}
Let $\xi$, $\phi$ be as above, $g \in \D^p(\Gamma)$ and $\tau_{\xi,\phi} \in \ell^{p'}(E)$ be a transport pattern. Then 
\[
\big| \smallint \! g \dd \xi  - \smallint \! g \dd \phi \big| \leq \|\nabla g\|_{\ell^p(E)} \|\tau_{\xi,\phi}\|_{\ell^{p'}(E)}.
\]
\end{lem}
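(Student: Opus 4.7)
The plan is to prove this by a discrete integration-by-parts step (using the definition of transport pattern) followed by H\"older's inequality on $E$. The whole argument is essentially one line; the only thing to verify is that the pairing manipulations are legitimate given the integrability assumptions on $g$, $\nabla g$, and $\tau_{\xi,\phi}$.

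First I would rewrite the left-hand side as a duality pairing. Since $\xi$ and $\phi$ are finitely supported probability measures,
\[
\smallint g \dd \xi - \smallint g \dd \phi = \sum_{x \in X} g(x) \big(\xi(x) - \phi(x)\big),
\]
which is a finite sum (even though $g$ itself need not be in $\ell^p(X)$). By the defining property of a transport pattern, $\xi - \phi = \nabla^* \tau_{\xi,\phi}$, so the sum equals $\langle g, \nabla^* \tau_{\xi,\phi} \rangle_X$.

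Next I would use the adjoint relation between $\nabla$ and $\nabla^*$ to move the gradient onto $g$:
\[
\langle g, \nabla^* \tau_{\xi,\phi} \rangle_X = \langle \nabla g, \tau_{\xi,\phi} \rangle_E.
\]
This is the step where one has to be a little careful: $g$ is only a $p$-Dirichlet function, so the pairing $\langle g, \nabla^* \tau_{\xi,\phi}\rangle_X$ must be interpreted as a finite sum. This is automatic because $\nabla^* \tau_{\xi,\phi} = \xi - \phi$ is finitely supported (and, per the paper's definition, one may even take $\tau_{\xi,\phi}$ itself to be finitely supported, which trivially justifies the discrete summation by parts). The general $\ell^{p'}$ case in the statement can then be obtained either directly (both sides are absolutely convergent by H\"older applied to $\nabla g \in \ell^p(E)$ and $\tau_{\xi,\phi} \in \ell^{p'}(E)$) or by approximating $\tau_{\xi,\phi}$ in $\ell^{p'}$-norm by finitely supported edge-functions with the same divergence, using that the bound to be proved only involves the $\ell^{p'}$-norm.

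Finally I would apply H\"older's inequality in $\ell^p(E) \times \ell^{p'}(E)$:
\[
\big|\langle \nabla g, \tau_{\xi,\phi}\rangle_E\big| \leq \|\nabla g\|_{\ell^p(E)}\,\|\tau_{\xi,\phi}\|_{\ell^{p'}(E)},
\]
yielding the claimed inequality. There is no real obstacle here: the summation-by-parts step is the only one requiring a moment of thought, and it is handled by the finite support built into the definition of transport pattern (with a standard density argument if one wishes to admit non-finitely-supported $\tau_{\xi,\phi} \in \ell^{p'}(E)$ satisfying $\nabla^* \tau_{\xi,\phi} = \xi - \phi$).
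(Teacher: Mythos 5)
Your proposal is correct and follows exactly the paper's argument: rewrite the difference of integrals as the pairing $\pgen{g \mid \xi - \phi} = \pgen{g \mid \nabla^* \tau_{\xi,\phi}} = \pgen{\nabla g \mid \tau_{\xi,\phi}}$ and apply H\"older's inequality. Your extra care about justifying the summation by parts is fine but not an issue here, since the paper's definition already takes $\tau_{\xi,\phi}$ to be finitely supported.
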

\begin{proof}
Simply note that 
\[
\smallint g \dd \xi  - \smallint g \dd \phi 
  = \pgen{g \mid \xi - \phi }
  = \pgen{ g \mid \nabla^* \tau_{\xi,\phi} } 
  = \pgen{ \nabla g \mid \tau_{\xi,\phi} } ,
\]
then conclude using Hölder's inequality.
\end{proof}
As a consequence obtaining the following bound will become important:
\[\eqtag \label{eq-cond-rw}
\limm{n \to \infty} \supp{k \geq 0} \|\tau_{\xi^{(n)}_x,\xi^{(n+k)}_x}\|_{\ell^{r}(E)} = 0.
\]
where $\xi^{(n)}_x$ is a sequence of (finitely supported probability) measures. 
\begin{dfn}
Let $\{\xi^{(n)}_x\}_{x \in X, n \in \zz_{\geq 0}}$ be (finitely supported probability) measures such that \eqref{eq-cond-rw} holds for $r=p'$ and for all $x \in X$. If $g \in \D^p(\Gamma)$, then $\xi^{(n)} * g$ converges pointwise and the resulting pointwise limit is called the boundary value of $g$ for $\xi^{(n)}$.
\end{dfn}
The above definition of boundary value is closer to that of Pansu \cite{Pan} than Bourdon \& Pajot \cite{BP}. With the choice of $\xi^{(n)}_x$ which will considered in a few paragraphs, these boundary values are harmonic functions. 

An important point in the preceding definition is that, if the condition \eqref{eq-cond-rw} holds for $r=p'$ it holds for $r=q'>p'$, and if $g \in \D^q(\Gamma)$ then $g \in \D^p(\Gamma)$. In other words, if this boundary value exists for $p$, it does not depend on $p$ and exists for all $q<p$.

The choice of $\tau$ is very important in the computation of the norm. Even in $\zz^2$ and for $\xi^{(n)}_x$ the normalised characteristic function of the balls around $x$, some (too simple) choices of $\tau$ will not satisfy the required condition whereas others will.

\subsection{Simple random walks}\label{ss-SRW}

In order to start with the easiest setting, the measure $\xi^{(n)}_x$ will, in this subsection and subsection \ref{ss-incvan}, be $P^{(n)}_x$ where $P^{(n)}_x(y)$ is the probability a simple random walker starting at $x$ reaches $y$ in $n$ steps. With this vocabulary, $P^{(n)} * g(x)$ is the expected value of $g$ after $n$ steps of a simple random walk starting at $x$. When the boundary value exists, it is a harmonic functions (for the simple random walk); see also Remark \ref{rpans} below.

The advantage of these function is that there is a natural (albeit very unefficient) choice for $\tau_{P^{(n)}_x, P^{(n+k)}_x}$. Namely, $\tau_{P^{(n)}_x, P^{(n+k)}_x}$ is given by continuing the random walk $k$ steps. Let us dwell on an explicit realisation of this idea.

First, if $\xi, \phi$ and $\psi$ are finitely supported probability measures, $\tau_{\xi,\phi}$ is a transport pattern from $\phi$ to $\xi$ and $\tau_{\psi,\xi}$ is a transport pattern from $\xi$ to $\psi$, then $\tau_{\psi,\phi} = \tau_{\xi,\phi} + \tau_{\psi,\xi}$. This follows trivially from the linearity of $\nabla^*$. For our current purpose, it will be sufficient to define the transport plan $\tau_{P^{(n)}_x, P^{(n+1)}_x}$, because one can then pick $\tau_{P^{(n)}_x, P^{(n+k)}_x} =  \sum_{i=n}^{n+k-1} \tau_{P^{(n)}_x, P^{(n+1)}_x}$.

Now let $P^{(i)}_{x,E}$ be the measure on the edges defined by
\[
\forall y \in X, \quad \forall z \in N(y), \qquad  P^{(i)}_{x,E}(y,z) = \tfrac{1}{|N(y)|} P^{(i)}_x(y).
\]
The claim is that this is a candidate for $\tau_{P^{(i)}_x, P^{(i+1)}_x}$. To verify this, fix a $y \in X$ and look at
\[
\sum_{z \in N(y)} P^{(i)}_{x,E}(y,z) = P^{(i)}_{x}(y) \quad \text{ and } \quad \sum_{z \in N(y)} P^{(i)}_{x,E}(z,y) = \sum_{z \in N(y)} \tfrac{1}{|N(z)|} P^{(i)}_x(z) = P^{(i+1)}_{x}(y).
\]
The first equality follows from the definition of $P^{(i)}_{x,E}$, the second from the definition of a simple random walk. The difference of these sum is the value of $\nabla^*P^{(i)}_{x,E}$ and shows the claim.

This formula is particularly useful as one can give an upper bound in terms of more well-studied quantities:
\[
\| \tau_{P^{(n)}_x, P^{(n+k)}_x} \|_{\ell^{p'}(E)} \leq \sum_{i=n}^{n+k-1} \|P^{(i)}_x\|_{\ell^{p'}(X)}.
\]
When all vertices have the same valency, one can take a slightly smaller constant in front of the sum on the right-hand side, but it is completely irrelevant for the present purpose. This proves:
\begin{lem}\label{t-bornnormrw-l}
If $\xi^{(n)}_x = P^{(n)}_x$ as above, and, for all $x\in X$,  $\sum_{i=0}^{\infty} \|P^{(i)}_x\|_{\ell^{p'}(X)} < +\infty$, then any $g \in \D^p(\Gamma)$ admits a boundary value for $\xi^{(n)}$.
\end{lem}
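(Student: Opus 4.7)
The plan is to verify the two things that "admits a boundary value" entails: first, the asymptotic vanishing \eqref{eq-cond-rw} for $r = p'$, and second, pointwise convergence of $P^{(n)} * g(x)$ as $n \to \infty$ for every $x \in X$. Both reduce quickly to the displayed bound immediately preceding the lemma statement, so there is no genuine obstacle; the core of the argument is really a Cauchy criterion argument powered by summability of $\|P^{(i)}_x\|_{\ell^{p'}(X)}$.

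For the first point, I would fix $x \in X$ and use the canonical transport pattern $\tau_{P^{(n)}_x, P^{(n+k)}_x} = \sum_{i=n}^{n+k-1} P^{(i)}_{x,E}$. The displayed inequality just before the lemma gives
\[
\|\tau_{P^{(n)}_x, P^{(n+k)}_x}\|_{\ell^{p'}(E)} \;\leq\; \tfrac{1}{|S|^{1/p}} \sum_{i=n}^{n+k-1} \|P^{(i)}_x\|_{\ell^{p'}(X)} \;\leq\; \tfrac{1}{|S|^{1/p}} \sum_{i=n}^{\infty} \|P^{(i)}_x\|_{\ell^{p'}(X)}.
\]
Since by hypothesis $\sum_{i=0}^{\infty} \|P^{(i)}_x\|_{\ell^{p'}(X)} < \infty$, the right-hand side is a tail of a convergent series, hence independent of $k$ and tending to $0$ as $n \to \infty$. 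Taking the supremum over $k \geq 0$ on the left and the limit in $n$ on the right establishes \eqref{eq-cond-rw} for $r = p'$.

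For the second point, I would apply Lemma \ref{t-dphold-l} to $\xi = P^{(n)}_x$, $\phi = P^{(n+k)}_x$, and the same transport pattern, which yields
\[
\bigl| P^{(n)} * g(x) - P^{(n+k)} * g(x) \bigr| \;\leq\; \|\nabla g\|_{\ell^p(E)} \, \|\tau_{P^{(n)}_x, P^{(n+k)}_x}\|_{\ell^{p'}(E)}.
\]
Combined with the bound above, this shows that $\bigl(P^{(n)} * g(x)\bigr)_{n \geq 0}$ is a Cauchy sequence in $\kk$, uniformly in $k$, and hence converges. This pointwise limit is by definition the boundary value of $g$, which is what was to be shown.
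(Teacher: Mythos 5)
Your proof is correct and follows the paper's own route exactly: the lemma is stated in the paper immediately after the displayed bound $\| \tau_{P^{(n)}_x, P^{(n+k)}_x} \|_{\ell^{p'}(E)} \leq \tfrac{1}{|S|^{1/p}} \sum_{i=n}^{n+k-1} \|P^{(i)}_x\|_{\ell^{p'}(X)}$, and the intended argument is precisely that the tail of the convergent series controls the supremum over $k$, giving \eqref{eq-cond-rw} for $r=p'$, after which Lemma \ref{t-dphold-l} yields the Cauchy property and pointwise convergence. You have merely made explicit the steps the paper leaves implicit in the phrase ``This proves:''.
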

In particular, the simple random walk is transient exactly when the condition of Lemma \ref{t-bornnormrw-l} holds for $p' = \infty$. (It never holds in the case $p'=1$, but $p=\infty$ is also not of interest.)

Fortunately, there are very good estimates at hand for $\|P^{(i)}_x\|_{\ell^{p'}(X)}$, and some of them rely only on isoperimetric profiles (and in the case of a Cayley graph, the growth of the group). 
Indeed, if $\Gamma$ has $\IS_d$, then
\[
\exists K>0, \quad \forall x,y \in X, \quad  P^{(n)}_x(y) \leq K n^{-d/2},
\]
see Woess' book \cite[(14.5) Corollary]{Woe}.
\begin{cor}
If $\Gamma$ has $\IS_d$, then boundary values of $g \in \D^p(\Gamma)$ exists for $\xi^{(n)}_x = P^{(n)}_x$ if $p < d/2$.
\end{cor}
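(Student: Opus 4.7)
The plan is to verify the summability hypothesis of Lemma \ref{t-bornnormrw-l}, namely
\[
\sum_{i=0}^{\infty} \|P^{(i)}_x\|_{\ell^{p'}(X)} < +\infty,
\]
using the on-diagonal heat kernel bound $P^{(n)}_x(y) \leq K n^{-d/2}$ which is stated just above the corollary as a consequence of $\IS_d$.

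The key step is an $\ell^1$--$\ell^\infty$ interpolation. For each $i \geq 1$ and $y \in X$ write
\[
P^{(i)}_x(y)^{p'} = P^{(i)}_x(y)\cdot P^{(i)}_x(y)^{p'-1} \leq P^{(i)}_x(y) \cdot \bigl(K i^{-d/2}\bigr)^{p'-1}.
\]
Summing over $y$ and using that $P^{(i)}_x$ is a probability measure gives
\[
\|P^{(i)}_x\|_{\ell^{p'}(X)}^{p'} \leq \bigl(K i^{-d/2}\bigr)^{p'-1},
\]
so that, using $(p'-1)/p' = 1/p$,
\[
\|P^{(i)}_x\|_{\ell^{p'}(X)} \leq K^{1/p}\, i^{-d/(2p)}.
\]

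Summing in $i$, the series $\sum_{i\geq 1} i^{-d/(2p)}$ converges precisely when $d/(2p) > 1$, i.e.\ when $p < d/2$, which is the hypothesis. Hence the sum $\sum_{i=0}^\infty \|P^{(i)}_x\|_{\ell^{p'}(X)}$ is finite for every $x \in X$ (the $i=0$ term contributes $1$), and Lemma \ref{t-bornnormrw-l} provides the existence of boundary values for every $g \in \D^p(\Gamma)$.

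There is no real obstacle: the only thing to be slightly careful about is the conversion $(p'-1)/p'=1/p$, and the observation that the uniform bound $P^{(i)}_x(y)\leq K i^{-d/2}$ is independent of $y$, so it can be factored out of the sum over $y$ after using one copy of $P^{(i)}_x(y)$ to turn the tail into a probability mass. Nothing else about the geometry is needed beyond the heat kernel decay that $\IS_d$ supplies.
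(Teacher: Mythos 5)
Your proof is correct and follows essentially the same route as the paper: the inequality you derive by factoring $P^{(i)}_x(y)^{p'} = P^{(i)}_x(y)\cdot P^{(i)}_x(y)^{p'-1}$ is exactly the interpolation bound $\| P^{(i)}_x \|_{\ell^{p'}} \leq \| P^{(i)}_x \|_{\ell^1}^{1/p'} \| P^{(i)}_x \|_{\ell^\infty}^{1/p}$ that the paper obtains from H\"older's inequality, and the summability conclusion for $p < d/2$ is identical.
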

\begin{proof}
Obviously $\| P^{(n)}_x \|_{\ell^1(X)}=1$. By H{\"o}lder's inequality, 
$$\| P^{(n)}_x \|_{\ell^{p'}(X)}  \leq \| P^{(n)}_x \|_{\ell^1(X)}^{1/p'}  \| P^{(n)}_x \|_{\ell^\infty(X)}^{1/p}.$$
Hence $\| P^{(n)}_x \|_{\ell^{p'}(X)} \leq K' n^{-d/2p}$ uniformly in $x$, for some $K'>0$.
\end{proof}
This shows that there are plenty of graphs where boundary values may be defined. It remains to prove these boundary values have the desired properties.
\begin{lem}\label{ttrivbnd-l}
Assume $\Gamma$ has $\IS_d$ and $p < \tfrac{d}{2}$. If $g \in \srl{\ell^p(X)}^{\D^p(\Gamma)}$ then its boundary value is trivial. 
\end{lem}
\begin{proof}
If $g \in \ell^p(\Grou)$ then $P^{(n)}_x g$ tends to $0$ (as $\ell^p \subset c_0$ and the mass of $P^{(n)}$ tends to $0$ on finite sets). It remains to be checked that convergence in $\D^p(\Gamma)$ does not alter the boundary value. However, by Lemma \ref{t-dphold-l}, if $g-g_n$ tends to $0$ in $\D^p(\Gamma)$ norm so does the difference of their boundary values.
\end{proof}
In fact, since the boundary value of a sum is the sum of boundary values, this shows the boundary value does not depend on the representative of the reduced cohomology class, up to a constant function. 
\begin{lem}\label{tbndtriv-l}
Assume $g\in \D^p(\Gamma)$, $\Gamma$ has $\IS_d$ and $p < \tfrac{d}{2}$. If the boundary value of $g$ for $P^{(n)}$ is a constant function, then $g \to c$ as $|x| \to \infty$ and (consequently) $[g]=0 \in \ssl{\ell^pH}^1(\Gamma)$.
\end{lem}
\begin{proof}
Up to changing $g$ by a constant (this does not affect its cohomology class), one may assume that the limit takes value $0$ everywhere. Fix a root $o \in X$ of the graph. For any $\eps>0$, let $n_\eps$ be such that $\|\nabla g \|_{\ell^p(E \setminus B_{n_\eps}(o))} < \eps$ and, uniformly in $x$, $\sum_{i \geq n_\eps} \|P^{(i)}_x\|_{\ell^{p'}(E)} < \eps$. 

Let $E[Y]$ denote the edges incident with $Y \subset X$ and $\pgen{. \mid .}_{E'}$ restriction of the pairing to the set $E'$. If $x \notin B_{3n_\eps}(o)$, then
\[
\begin{array}{rl}
|P^{(k)} * g(x) - g (x) | 
  &= |\pgen{ g \mid P^{(k)}_x - \delta_x }| \\
  &= |\pgen{ g \mid \nabla^* \tau_{P^{(k)}_x, \delta_x } }|\\
  &= |\pgen{ \nabla g \mid \tau_{P^{(k)}_x, \delta_x } }|\\
  & \leq | \pgen{ \nabla g \mid \tau_{P^{(k)}_x, \delta_x } }_{E[B_{n_\eps}(x)]}| +|\pgen{ \nabla g \mid \tau_{P^{(k)}_x, \delta_x } }_{E[B_{n_\eps}]^\comp} | \\
  & \leq \| \nabla g\|_{\ell^p(E[B_{n_\eps}(x)])} \|\tau_{P^{(k)}_x, \delta_x} \|_{\ell^{p'}(E)} + \| \nabla g\|_{\ell^p(E )} \|\tau_{P^{(k)}_x, \delta_x}\|_{\ell^{p'}(E \setminus B_{n_\eps}(x))} \\
  & \leq \eps  \sum_{i \geq 0} \|P^{(i)}_x\|_{\ell^{p'}(E)}  + \| \nabla g\|_{\ell^p(E)}  \sum_{i \geq n_\eps} \|P^{(i)}_x\|_{\ell^{p'}(E)} \\
  & \leq \eps  \sum_{i \geq 0} \|P^{(i)}_x\|_{\ell^{p'}(E)}  + \| \nabla g\|_{\ell^p(E)} \eps \\
  & \leq c \eps ,
\end{array} 
\]
where $c$ is a constant depending only on the constant in $\IS_d$ and the $\D^p$-norm of $g$. Thus, letting $k \to \infty$, for all $x \notin B_{3n_\eps}(e)$, $|g(x)| \leq c \eps$. Thus, Lemma \ref{tbndvalintuit-l} may be applied to yield that $[g]=0$.
\end{proof}
A trivial, but useful, remark, is that if $g$ is bounded (\ie in $\ell^\infty(X)$) then its boundary value is also bounded.

\subsection{Inclusion and vanishing}\label{ss-incvan}

\begin{cor}\label{tpoiss1-c}
Let $\Gamma$ be a graph with $\IS_d$ and $1\leq q \leq p < d/2$. Then the natural quotient $\ssl{\ell^qH}^1(\Gamma) \to \ssl{\ell^pH}^1(\Gamma)$ is an injection. 
\end{cor}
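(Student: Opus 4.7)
The plan is to use the boundary value machinery just developed: since $q\leq p < d/2$, all the hypotheses needed for the boundary value for the simple random walk are available for both $q$ and $p$, and crucially the boundary value does not depend on which exponent one uses.

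Concretely, let $g \in \D^q(\Gamma)$ represent a class that maps to zero in $\ssl{\ell^pH}^1(\Gamma)$. First, I would note that the inclusions $\ell^q(X) \subset \ell^p(X)$ and $\D^q(\Gamma) \subset \D^p(\Gamma)$ make the natural quotient a well-defined map. By hypothesis, there exist $f_n \in \ell^p(X)$ and constants $c_n \in \kk$ such that $g_n := f_n + c_n \overset{\D^p}{\to} g$. Since $g \in \D^q \subset \D^p$ and $q < d/2$, the corollary in \S{}\ref{ss-SRW} together with the remark after the definition of boundary value guarantees that the pointwise limit $h(x) := \lim_{n \to \infty} P^{(n)}*g(x)$ exists, and that $h$ can equally be thought of as the boundary value of $g$ viewed in $\D^p$ or in $\D^q$.

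The next step is to identify $h$ as a constant function. By Lemma \ref{ttrivbnd-l} the boundary value of $f_n$ vanishes, while the boundary value of the constant $c_n$ is $c_n$ itself; by linearity, the boundary value of $g_n$ is the constant $c_n$. To pass to the limit, I would apply Lemma \ref{t-dphold-l} to the difference $g - g_n$ with $\xi = P^{(N)}_x$ and $\phi = \delta_x$, using the canonical transport pattern $\tau_{P^{(N)}_x,\delta_x}$ of \S{}\ref{ss-SRW}. This yields the uniform-in-$N$ estimate
\[
\bigl| P^{(N)}*(g-g_n)(x) - (g-g_n)(x) \bigr| \leq \|\nabla(g-g_n)\|_{\ell^p(E)} \cdot \sum_{i \geq 0} \|P^{(i)}_x\|_{\ell^{p'}(E)}.
\]
Letting $N \to \infty$ gives $|h(x) - c_n| \leq |g(x)-g_n(x)| + C\|g-g_n\|_{\D^p}$, and the right-hand side tends to $0$ because $\D^p$-convergence forces pointwise convergence. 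Thus $c_n$ converges to $h(x)$ for every $x$, so $h$ must be a constant function.

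Finally, I would invoke Lemma \ref{tbndtriv-l} applied with exponent $q$ rather than $p$: the isoperimetric estimate $\|P^{(i)}_x\|_{\ell^{q'}(X)} \leq K' i^{-d/2q}$ and $q < d/2$ ensure the required summability $\sum_i \|P^{(i)}\|_{\ell^{q'}} < \infty$, so the lemma applies to $g \in \D^q(\Gamma)$ and gives $[g] = 0$ in $\ssl{\ell^qH}^1(\Gamma)$. This is precisely injectivity of the natural quotient. The only subtle point is really the exponent bookkeeping, making sure every lemma is available at the exponent $q$ and that the same pointwise limit $h$ serves as boundary value for both classes; everything else reduces to assembling the results already established.
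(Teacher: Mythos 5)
Your argument is correct and uses exactly the machinery the paper uses: the boundary value for $P^{(n)}_x$ exists for $p<d/2$, is independent of the exponent, vanishes on $\srl{\ell^p(X)+\kk}^{\D^p}$ by Lemma \ref{ttrivbnd-l} (plus continuity via Lemma \ref{t-dphold-l}), and detects triviality at exponent $q$ by Lemma \ref{tbndtriv-l}. The paper phrases the same proof contrapositively ($[g]\neq 0$ in $\ssl{\ell^qH}^1$ forces a non-constant boundary value, which then obstructs triviality in $\ssl{\ell^pH}^1$), while you argue directly and spell out the limiting step $c_n\to h(x)$ that the paper relegates to the remark after Lemma \ref{ttrivbnd-l}; the content is the same.
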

\begin{proof}
Assume $[g] \neq [0] \in \ssl{\ell^qH}^1(\Gamma)$. Then, by Lemma \ref{tbndtriv-l}, its boundary value is not trivial. However, this boundary value does not depend on $p$ and so, by Lemma \ref{ttrivbnd-l}, $g$ is not trivial in $\ssl{\ell^pH}^1(\Gamma)$.
\end{proof}
Finally, let us consider the case where the graph has is Liouville, \ie there are no non-constant bounded harmonic functions. This means that, if $g \in \ell^\infty(X)$ and $P^{(n)}* g$ converges pointwise, then the limit is a constant function.
\begin{cor}\label{tpoiss2-c}
Let $\Gamma$ be a Liouville graph with $\IS_d$ and $1\leq q \leq p < d/2$. , then $\ssl{\ell^pH}^1(\Gamma) = \{0\}$ for all $p \in [1,\infty[$.
\end{cor}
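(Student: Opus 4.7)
The plan is to combine Lemma \ref{tholoso-l} with the boundary value machinery built up in \S\ref{ss-SRW}. By Lemma \ref{tholoso-l}, the reduced $\ell^p$-cohomology (for $1<p<\infty$) is trivial as soon as every bounded $g \in \D^p(\Gamma)$ represents the trivial class. So fix a bounded $g \in \D^p(\Gamma)$ with $p < d/2$; it suffices to prove $[g] = 0 \in \ssl{\ell^pH}^1(\Gamma)$. The case $p = 1$ is handled separately as indicated in the introduction (appendix \ref{sapp}).

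Because $\Gamma$ satisfies $\IS_d$ with $p < d/2$, the corollary following Lemma \ref{t-bornnormrw-l} gives the heat kernel bound $\|P^{(n)}_x\|_{\ell^{p'}(X)} \leq K' n^{-d/2p}$ uniformly in $x$, which is summable in $n$ since $d/2p > 1$. In particular, condition \eqref{eq-cond-rw} is satisfied for $r = p'$, so the boundary value
\[
h(x) := \lim_{n \to \infty} P^{(n)} * g(x)
\]
exists pointwise on $X$. As observed at the beginning of \S\ref{ss-SRW}, any such boundary value is a harmonic function for the simple random walk.

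Since $g \in \ell^\infty(X)$ and $P^{(n)}_x$ is a probability measure, $\|P^{(n)} * g\|_{\ell^\infty(X)} \leq \|g\|_{\ell^\infty(X)}$ for every $n$, so $h$ is a bounded harmonic function on $\Gamma$. The Liouville hypothesis then forces $h$ to be constant. Applying Lemma \ref{tbndtriv-l} (whose hypothesis $\sum_{i \geq 0} \|P^{(i)}_x\|_{\ell^{p'}(X)} < \infty$ is precisely what we verified above) yields $[g] = 0$ in $\ssl{\ell^pH}^1(\Gamma)$, as required.

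I do not anticipate any serious obstacle: every ingredient — existence of the boundary value (from the $\IS_d$-based heat kernel estimate and $p < d/2$), the trivial $\ell^\infty$-bound $\|P^{(n)} * g\|_\infty \leq \|g\|_\infty$, and the Liouville-to-trivial-class implication of Lemma \ref{tbndtriv-l} — has been pre-packaged in the preceding lemmas. The only mildly delicate point is the Holopainen–Soardi reduction to bounded representatives, which is the reason the proof is not literally a three-line invocation of Lemma \ref{tbndtriv-l}, but that step has already been proved as Lemma \ref{tholoso-l}.
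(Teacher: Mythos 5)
Your proof is correct and follows essentially the same route as the paper: reduce to bounded representatives via Lemma \ref{tholoso-l}, note that the boundary value of a bounded $g$ (which exists since $\IS_d$ and $p<d/2$ make $\sum_i\|P^{(i)}_x\|_{\ell^{p'}}$ finite) is a bounded harmonic function, hence constant by the Liouville hypothesis, and conclude with Lemma \ref{tbndtriv-l}. The only superfluous detour is treating $p=1$ separately: the proof of Lemma \ref{tholoso-l} given in the paper applies for $p=1$ too (and $\D^1(\Gamma)\subset\ell^\infty(X)$, so every $\D^1$-class is already bounded), so the same argument covers that case directly.
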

\begin{proof}
By the truncation lemma from Holopainen \& Soardi (see Lemma \ref{tholoso-l}), it suffices to show all bounded functions in $\D^p(\Gamma)$ are trivial. But if $g$ is in $\ell^\infty(X)$ and the graph is Liouville, the boundary value of $g$ for $P^{(n)}_x$ is constant. By Lemma \ref{tbndtriv-l}, the conclusion follows.
\end{proof}

\begin{rmk}\label{rpans}
As P.~Pansu pointed out to the author, a result of Lohoué \cite{Loh} shows that in graphs satisfying $\IS_\omega$, there is a harmonic element in each (unreduced) class. This element is the boundary value above. To see this, first recall that the representative in the (reduced or unreduced) class of $g$ exhibited by N.~Lohoué is defined by $g+u$ where $u= \Delta^{-1} (-\Delta g)$, where $\Delta = \Id - R$ and $R$ is the random walk (or averaging operator): $Rg = P^{(1)}*g$. Thanks to H.~Kesten, graphs satisfying $\IS_\omega$ are exactly those where  $\|R\|_{\ell^2 \to \ell^2} < 1$. On the other hand, $\|R\|_{\ell^p \to \ell^p} \leq 1$ for $p=1$ or $\infty$. Thus $\|R\|_{\ell^p \to \ell^p} < 1$ for any $p\in ]1,\infty[$ by Riesz-Thorin interpolation, and $\Delta^{-1} = \sum_{i \geq 0} R^i$ is bounded from $\ell^p(X)$ to itself. Next notice that
\[
\begin{array}{rlll}
\tilde{g} - g 
  &= \limm{n \to \infty} P^{(n)}* g - g 
  &= \limm{n \to \infty} R^n g - g \\
  &= \sum_{i \geq 0} (R^{i+1} g - R^i g) 
  &= \sum_{i \geq 0} R^i (R - \Id) g  \\
  &=  ( \sum_{i \geq 0} R^i ) * (-\Delta g),
\end{array}
\]
to conclude that $\tilde{g} = g+u$.
\end{rmk}

This argument may not apply in (many, if not all) Cayley graphs of amenable groups. Indeed, this would mean the harmonic function belongs to the same (unreduced!) cohomology class. But, in amenable groups, the reduced and unreduced $\ell^p$-cohomologies are never equal (see Guichardet \cite[Corollaire 1]{Guich}). Thus, if the reduced $\ell^p$-cohomology is trivial (which is already known for many amenable groups), this would give a contradiction. 

However, it is still possible to answer positively a weaker form of P.~Pansu's question \cite[Question 6 in \S{}1.9]{Pan-rs} (\ie is the absence of non-constant harmonic function whose gradient has finite $\ell^p$-norm is equivalent to $\ssl{\ell^pH}^1(\Gamma)=\{0\}$). Lemma \ref{tbndtriv-l} shows that if there is such a harmonic function and $\Gamma$ has $\IS_d$ for $d >2p$, then $\ssl{\ell^pH}^1(\Gamma)$ is not trivial. Indeed, such a function would be its own boundary value, and being non-constant, it would be non-trivial in cohomology.

Let us now address the reverse implication.

\begin{lem}\label{tquespan-l}
Let $\Gamma$ be a graph with $\IS_d$ and $1\leq p < d/2$. For any $g \in \D^p(\Gamma)$, let $\tilde{g}$ be its boundary value for $P^{(n)}_x$. Then $\tilde{g}$ is in the same $\ell^qH^1(\Gamma)$ class as $g$ for all $q> \tfrac{dp}{d-2p}$.  
\end{lem}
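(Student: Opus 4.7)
The plan is to show that the difference $u := \tilde g - g$ lies in $\ell^q(X)$, from which $\nabla u \in \nabla \ell^q(X)$ follows at once. To make sense of this, one first checks $g \in \D^q(\Gamma)$: since $q > \frac{dp}{d-2p} > p$ (the latter because $d > 2p$) and the edge set carries counting measure, the inclusion $\ell^p(E) \subset \ell^q(E)$ gives $\nabla g \in \ell^q(E)$.

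For the main estimate, write the telescoping identity used in Remark \ref{rpans}:
\[
u(x) = \lim_{n\to\infty} P^{(n)}*g(x) - g(x) = \sum_{i \geq 0} P^{(i)}* h(x), \qquad \text{where } h = -\Delta g = (P - \Id)g.
\]
This converges pointwise by the very existence of the boundary value, and $h \in \ell^p(X)$: indeed $|h(x)| \leq \frac{1}{|S|}\sum_{s \in S}|\nabla g(x,sx)|$, so $\|h\|_{\ell^p(X)} \leq |S|^{-1/p'}\|\nabla g\|_{\ell^p(E)}$. The task is then to bound $\sum_{i\geq 0} \|P^{(i)} h\|_{\ell^q(X)}$.

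The key input is the heat kernel decay coming from $\IS_d$, namely $\sup_{x,y} P^{(i)}_x(y) \leq K i^{-d/2}$ (see \cite[(14.5) Corollary]{Woe}). This is exactly the $\ell^1 \to \ell^\infty$ norm of $P^{(i)}$, while contractivity $\|P^{(i)}\|_{\ell^p \to \ell^p} \leq 1$ holds for every $p$. A direct H\"older argument (alternatively, Riesz--Thorin interpolation) then yields, for $p \leq q$,
\[
\|P^{(i)}\|_{\ell^p(X) \to \ell^q(X)} \leq C \, i^{-(d/2)(1/p - 1/q)}.
\]
Summing the geometric-type series:
\[
\|u\|_{\ell^q(X)} \leq \sum_{i \geq 0} \|P^{(i)} h\|_{\ell^q(X)} \leq C \, \|h\|_{\ell^p(X)} \sum_{i \geq 0}(1+i)^{-(d/2)(1/p - 1/q)},
\]
and the series converges precisely when $(d/2)(1/p - 1/q) > 1$, which rearranges to $q > \frac{dp}{d-2p}$.

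This gives $u \in \ell^q(X)$, hence the sum also converges in $\ell^q$ and its limit agrees with the pointwise limit, and so $\nabla(\tilde g - g) = \nabla u \in \nabla \ell^q(X)$, i.e.\ $[\tilde g] = [g]$ in $\ell^q H^1(\Gamma)$. The main delicate point I anticipate is the interpolation step $\|P^{(i)}\|_{\ell^p \to \ell^q} \lesssim i^{-(d/2)(1/p-1/q)}$: for Cayley graphs one has reversibility under the counting measure and the estimate is standard, but in the stated generality of bounded-valency graphs one may have to pass to the degree-weighted measure, which only changes constants. Once this is settled, the endpoint is sharp in the interpolation, explaining the strict inequality $q > \frac{dp}{d-2p}$ in the conclusion rather than equality.
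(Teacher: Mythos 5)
Your proof is correct and follows essentially the same route as the paper: the telescoping identity $\tilde g-g=\sum_{i\geq 0}P^{(i)}*\Delta g$ with $\Delta g\in\ell^p(X)$, the heat-kernel decay $\sup_{x,y}P^{(i)}_x(y)\preccurlyeq i^{-d/2}$ from $\IS_d$, and a summable gain of integrability yielding $\tilde g-g\in\ell^q(X)$ exactly for $q>\tfrac{dp}{d-2p}$. The only (cosmetic) difference is that you sum the interpolated operator norms $\|P^{(i)}\|_{\ell^p\to\ell^q}$ term by term, whereas the paper applies Young's inequality for kernel operators once to $\sum_{i\geq 0}P^{(i)}$, using that $\sup_x\|\sum_i P^{(i)}_x\|_{\ell^s(X)}<\infty$ for $s'<d/2$; both give the same exponent range.
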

\begin{proof}
As before, write:
\[
\tilde{g} - g = \limm{n \to \infty} P^{(n)} *g - g = \sum_{i \geq 0} (P^{(i+1)} *g - P^{(i)}* g) = \sum_{i \geq 0} P^{(i)} *(P^{(1)} - \Id) * g 
\]
Let $h = (P^{(1)}- \Id)* g = \Delta g \in \ell^p(X)$ and remember $P^{(i)}$ are operators defined by a kernel. Using Young's inequality (see \eg Sogge's book \cite[Theorem 0.3.1]{Sogge}) for $r>p$ and $1+\tfrac{1}{r} = \tfrac{1}{p} + \tfrac{1}{q}$,
\[
\begin{array}{rll}
\| \tilde{g} - g \|_{\ell^r(X)} 
  &= \| (\sum_{i \geq 0} P^{(i)}) * h  \|_{\ell^r(X)} 
  &\leq \supp{x \in X} \| \sum_{i \geq 0} P^{(i)}_x \|_{\ell^q(X)} \| h  \|_{\ell^p(X)} \\
  &\leq 2D  \supp{x \in X} \| \sum_{i \geq 0} P^{(i)}_x \|_{\ell^q(X)} \|\nabla g\|_{\ell^p(E)}
\end{array}
\]
where $D$ is the maximal valency of a vertex. Since $\supp{x \in X} \| \sum_{i \geq 0} P^{(i)} \|_{\ell^q(X)} < +\infty$ for all $q'<d/2$, this means $\tilde{g}- g \in \ell^r(X)$ (for all $r> \tfrac{dp}{d-2p}$) and consequently that $\tilde{g}$ and $g$ belong in the same (unreduced) $\ell^r$-cohomology class.
\end{proof}

As mentioned before, to address the question of whether $\tilde{g}$ and $g$ are in the same reduced $\ell^p$-cohomology class, the author believes one would need to consider question similar to those of the transport problem from \cite{moi-trans}.

\begin{cor}\label{tpoiss3-c}
Let $\Gamma$ be a graph with $\IS_d$ and $1\leq p < d/2$. If there are no non-constant bounded harmonic functions whose gradient has finite $\ell^p(E)$ norm, then $\ssl{\ell^qH}(\Gamma)= \{0\}$ for all $q < \tfrac{pd}{d+2p}$. 

Conversely, if there is a  non-constant harmonic functions whose gradient has finite $\ell^p(E)$ norm, then $\ssl{\ell^pH}(\Gamma)\neq \{0\}$ and there is also a \emph{bounded} non-constant harmonic functions whose gradient has finite $\ell^r(E)$ norm for $r > \tfrac{pd}{d-2p}$. 
\end{cor}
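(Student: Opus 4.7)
The plan is to bundle together three ingredients from subsection~\ref{ss-SRW}: Lemma~\ref{tbndtriv-l} (constant boundary value implies trivial class), Lemma~\ref{ttrivbnd-l} (elements of $\srl{\ell^p}^{\D^p}$ have trivial boundary value), and Lemma~\ref{tquespan-l} (control of $\tilde g - g$ in $\ell^r$), combined with the Holopainen--Soardi truncation trick of Lemma~\ref{tholoso-l}. The two directions are then essentially symmetric applications of the boundary-value operator, with the forward direction using Lemma~\ref{tquespan-l} to pass from $\D^q$ to a bounded harmonic function in $\D^p$.

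For the forward direction, take $g \in \D^q(\Gamma)$ for some $q < \tfrac{pd}{d+2p}$. By Lemma~\ref{tholoso-l} one may assume $g$ is bounded, so the boundary value $\tilde g$ is likewise bounded and harmonic. The algebraic identity $q < \tfrac{pd}{d+2p} \ssi \tfrac{dq}{d-2q}<p$ allows one to choose $r$ with $\tfrac{dq}{d-2q} < r \leq p$; Lemma~\ref{tquespan-l} applied with $q$ in place of $p$ then gives $\tilde g - g \in \ell^r(X)$, whence $\nabla(\tilde g-g) \in \ell^r(E) \subseteq \ell^p(E)$. Since also $\nabla g \in \ell^q(E) \subseteq \ell^p(E)$ (counting measure, $q \leq p$), one gets $\nabla \tilde g \in \ell^p(E)$. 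The hypothesis on bounded harmonic functions then forces $\tilde g$ to be constant, and Lemma~\ref{tbndtriv-l} concludes $[g] = 0 \in \ssl{\ell^qH}^1(\Gamma)$.

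For the converse, let $h$ be non-constant harmonic with $\nabla h \in \ell^p(E)$. Because $Ph = h$, the function $h$ coincides with its own boundary value. If $[h]=0$ in $\ssl{\ell^pH}^1(\Gamma)$, then $h - c \in \srl{\ell^p(X)}^{\D^p}$ for some constant $c$, and Lemma~\ref{ttrivbnd-l} together with the $\D^p$-continuity of boundary values (Lemma~\ref{t-dphold-l}) forces the boundary value of $h-c$ to be zero. But this boundary value is $h-c$ itself, so $h \equiv c$, contradicting non-constancy. Hence $[h] \neq 0$.

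For the existence of a \emph{bounded} such harmonic function, apply Lemma~\ref{tholoso-l} to $h$ to obtain a bounded truncation $h_t \in \D^p$ with $[h_t] \neq 0$ for $t$ sufficiently large. Its boundary value $\tilde h_t$ is automatically harmonic and bounded (by the remark following Lemma~\ref{tbndtriv-l}), and non-constant by the contrapositive of Lemma~\ref{tbndtriv-l}. The main obstacle is verifying that $\nabla \tilde h_t \in \ell^p(E)$: Lemma~\ref{tquespan-l} only yields $\nabla(\tilde h_t - h_t) \in \ell^r(E)$ for $r > \tfrac{dp}{d-2p}$, which is strictly weaker than $\ell^p(E)$. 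I would close this gap by establishing a uniform estimate $\|\nabla P^{(n)}* h_t\|_{\ell^p(E)} \leq \|\nabla h_t\|_{\ell^p(E)}$ along the lines of Remark~\ref{rpans}, exploiting the commutation between $\nabla$ and $P$ available in the situations of interest, and then invoking Fatou on the pointwise limit defining $\tilde h_t$, rather than passing through the Young-inequality bound of Lemma~\ref{tquespan-l}.
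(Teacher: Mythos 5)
Your forward direction and the non-vanishing half of the converse are correct and follow the same route as the paper: reduce to bounded $g$ via Lemma~\ref{tholoso-l}, pass to the boundary value, use Lemma~\ref{tquespan-l} together with the arithmetic $q<\tfrac{pd}{d+2p}\ssi\tfrac{dq}{d-2q}<p$ to see that $\tilde g$ is a bounded harmonic function with gradient in $\ell^p(E)$, and conclude with Lemma~\ref{tbndtriv-l}; for the converse, a harmonic function is its own boundary value, and Lemma~\ref{ttrivbnd-l} (you are right that this, rather than Lemma~\ref{tbndtriv-l} which the paper cites, is the lemma actually needed) shows its class cannot be trivial. The paper's proof is a two-line compression of exactly this.

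The step that would fail is your proposed repair of the final claim. The uniform bound $\|\nabla P^{(n)}*h_t\|_{\ell^p(E)}\leq\|\nabla h_t\|_{\ell^p(E)}$ rests on $\nabla$ commuting with $P$, and that commutation is not available here: the corollary is stated for arbitrary graphs with $\IS_d$, where there is no group structure at all, and even on a Cayley graph of a non-abelian group both $P$ and the components of $\nabla$ are translations on the \emph{same} side, so they do not commute (what commutes with $P$ is the action by graph automorphisms, i.e.\ translations on the other side, which does not control the gradient). Conjugating generators through words of length $n$ only yields $\|\nabla P^{(n)}*h_t\|_{\ell^p}\lesssim n\,\|\nabla h_t\|_{\ell^p}$, which is useless in the limit. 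That said, you have put your finger on a genuine soft spot: the paper disposes of this point in one sentence, and the only estimate it actually supplies (Lemma~\ref{tquespan-l}) produces a bounded non-constant harmonic function with gradient in $\ell^r(E)$ only for $r>\tfrac{dp}{d-2p}$ --- precisely the loss of regularity the introduction concedes when discussing Pansu's question. So either the bounded conclusion should be stated with that weaker exponent, or an argument not present in the paper is required; the commutation route is not it.
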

\begin{proof}
Under this new hypothesis, one has that $\tilde{g}$ is constant (by Lemma \ref{tquespan-l}), so Lemma \ref{tbndtriv-l} implies that $[g]=0 \in \ssl{\ell^pH}^1(\Gamma)$. 

As mentioned above, the converse is a consequence of Lemma \ref{tbndtriv-l}. Boundedness may be added thanks to the Holopainen \& Soardi lemma (see Lemma \ref{tholoso-l}), but upon taking boundary values again the gradient might loose regularity and be only in $\ell^r$ for some bigger $r$.
\end{proof}
Corollaries \ref{tpoiss1-c}, \ref{tpoiss2-c} and \ref{tpoiss3-c} are particularly interesting for graphs satisfying $\IS_d$ for every $d \in \zz_{\geq 1}$. This will be developed in subsection \ref{ss-varia}.

\section{Consequences}\label{s-caygra}

This section finishes the proof of theorems announced in the introduction, namely Theorems \ref{tinclu-t} and \ref{tvanquot-t}.

\subsection{Graphs having $\IS_\omega$}\label{ss-graiso}

\begin{proof}[Proof of Theorem \ref{tinclu-t}]
When the graph has $\IS_\omega$, the fact that the quotient is always separated (\ie $\ssl{\ell^q H}^1(\Gamma) = \ell^q H^1(\Gamma)$) is a consequence Remark \ref{rpans}.

Let $\Gamma =: \Gamma_1$. For any finite subset $F$ of $\Gamma_1$, $| \del^{\Gamma_1} F | \geq c |F|$ for some $c>0$. So take $n \in \nn$ such that $nc \geq 1$. Let $\Gamma_k$ be graph obtained by adding, for all $\gamma \in \Gamma$, an edge between $\gamma$ and all vertices at distance $\leq n$ from $\gamma$ (\ie the $k$-fuzz of $\Gamma$). Denote $\del^{\Gamma_k}F$ to be the boundary of $F$ in $\Gamma_k$ and $B_k(F)$ to be the sets of vertices at distance $\leq k$ from $F$ ($F$ included). Then, for any finite subset $F$ of $\Gamma_n$, 
\[
|\del^{\Gamma_n} F| \geq |\del^{\Gamma_1} B_n(F)| + |\del^{\Gamma_{n-1}}F| \geq c |F| + |\del^{\Gamma_{n-1}}F| \geq \ldots \geq nc |F| \geq |F|.
\]
In other words, there is a graph $\Gamma_n$, quasi-isometric to $\Gamma_1$, satisfying a strong isoperimetric profile with constant $\geq 1$. But by a result of Benjamini and Schramm \cite[Theorem 1.2]{BS}, there is then a spanning tree in $\Gamma_n$ which is obtained by adding edges between disjoint copies of binary trees. Let $T$ be this tree, and look at the simple random walk on this tree. The boundary value of $g$ for this tree will be in the same class for $\ell^pH^1(\Gamma)$ as $g$ (the difference being in $\ell^p(X)$, the class is preserved for any set of edges as long as the valency remains bounded). 

Furthermore, the fact that the boundary value is constant depends only on the boundary value for the tree. Boundary values actually shows the existence of a commuting diagram of ``natural'' injections (where $T$ is the spanning tree inside $\Gamma_n$):
\[
\begin{array}{ccc}
\ell^qH^1(\Gamma_n) & \to  & \ell^qH^1(T) \\
   \downarrow     &      & \downarrow \\
\ell^pH^1(\Gamma_n) & \to  & \ell^pH^1(T) \\
\end{array}
\]
As a consequence, the only boundary values possible (in a graph $\Gamma$ having  $\IS_\omega$) are those of $T$. To know if a boundary value of the tree is an actual cohomology class in $\Gamma$, it remains to check that it belongs to $\D^p(\Gamma)$.
\end{proof}

\subsection{Further corollaries}\label{ss-varia}

One of the most important consequence of boundary values is that they really give an idea of how the function behaves at infinity. It is already nice that it does not depend on $p$ (up to some value), but it in fact also does not depend so much on the graph. Recall that a spanning subgraph $H$ of $\Gamma$ is a graph on the same vertices but with some edges removed. Two things are easy to check. First, if a spanning subgraph $H$ has $\IS_d$, then so does the full graph $\Gamma$. Second, if $g \in \D^p(\Gamma)$ then $g \in \D^p(H)$.

However, $\ssl{\ell^pH}^1(H) = \{0\}$ has, in general, no incidence on the triviality of $\ssl{\ell^pH}^1(\Gamma)$. For example, there are many graphs with a spanning line (or half-line) which have non-trivial $\ell^p$-cohomology.
\begin{cor}\label{tvanspan-c}
Let $H$ be a connected spanning subgraph of $\Gamma$ which has $\IS_d$ (hence $\Gamma$ also has $\IS_d$) and assume $p < d/2$. Then if $g \in \D^p(\Gamma)$ (hence in $\D^p(H)$) is such that $[g] = 0 \in \ssl{\ell^pH}^1(H)$ then $[g] = 0 \in \ssl{\ell^pH}^1(\Gamma)$.

In particular, if $\ssl{\ell^pH}^1(\Gamma) \neq \{0\}$ then $\ssl{\ell^pH}^1(H) \neq \{0\}$.
\end{cor}
\begin{proof}
Indeed, if $[g] = 0 \in \ssl{\ell^pH}^1(H)$ then, by Lemmas \ref{ttrivbnd-l} and \ref{tbndtriv-l}, then $g \to c$ as $|x| \to \infty$. This statement remains true in $\Gamma$ (because $H$ is spanning and connected), hence, by Lemma \ref{tbndvalintuit-l}, the conclusion follows.
\end{proof}
The main application of this idea is done in \S{}\ref{ssarggab}, \ie the proof of Theorem \ref{tvanquot-t}.
\begin{rmk}\label{runiq}
Recall that there is a unique (up to a constant) $p$-harmonic function in each reduced $\ell^p$-cohomology class. The existence of a harmonic function in reduced $\ell^p$-cohomology class seems unclear (at least to the author). However, it is easy to see that, if it exists and $p < \tfrac{d}{2}$, it is unique (up to constants). Indeed, assume $h_1$ and $h_2$ are two harmonic functions in $\D^p(\Gamma)$ and $\Gamma$ has $\IS_d$ for $d> 2p$. Then $h_1-h_2$ is harmonic and of trivial class. By Lemma \ref{tbndtriv-l}, this means that it is constant at infinity. But a harmonic function which is constant at infinity is also constant by the maximum principle. Hence, $h_1-h_2$ is a constant function.
\end{rmk}
The upcoming corollary is just to make a cleaner statement in the case of groups of superpolynomial growth (the case of polynomial growth being well-understood, see \S{}\ref{s-intro} or \S{}\ref{ss-discu}). N.~Varopoulos showed that superpolynomial growth of a group implies superpolynomial decay of $\|P^{(i)}\|_{\ell^\infty(\Grou)}$ (\eg see \cite{Var} or Woess' book \cite[(14.5) Corollary, p.148]{Woe}. 
In particular, Cayley graphs of groups of polynomial growth of degree $d$ satisfy $\IS_d$ and Cayley graphs of groups of superpolynomial growth will satisfy $\IS_d$ for any $d \in \rr_{\geq 1}$. In the latter case, Corollaries \ref{tpoiss1-c}, \ref{tpoiss2-c} and \ref{tpoiss3-c} yield
\begin{cor}
Let $G$ be a group of superpolynomial growth and $\Gamma$ a Cayley graph. Let $1\leq p <\infty$. Then there exists a map $\pi$ from $\D^p(\Gamma)$ to the space of harmonic functions modulo constants such that
\begin{enumerate}
\item $\pi(g) = \pi(h) \iff [g]=[h] \in \ssl{\ell^pH}^1(\Gamma)$;
\item $g \in \ell^\infty(G) \implies \pi(g) \in \ell^\infty$;
\item $\pi(g) \in \D^q(\Gamma)$ for any $q>p$.
\end{enumerate}
\end{cor}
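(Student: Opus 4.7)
My plan is to take for $\pi$ the boundary-value map $\pi(g) := \tilde{g} = \lim_{n \to \infty} P^{(n)} * g$, regarded modulo constants. The groundwork is already in place: superpolynomial growth guarantees $\IS_d$ for every $d \in \zz_{\geq 1}$, so for any fixed $p \in [1,\infty[$ one may choose $d > 2p$ and existence of the pointwise limit for every $g \in \D^p(\Gamma)$ follows from the results of \S{}\ref{ss-SRW}. Harmonicity of $\tilde g$ comes from exchanging the pointwise limit with the finite sum over neighbours (bounded valency): $P\tilde{g}(x) = \lim_n P^{(n+1)} * g(x) = \tilde{g}(x)$.

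For (1), I would first note that $\pi$ is linear. The forward direction ``$\pi(g) = \pi(h) \Rightarrow [g]=[h]$'' is then a direct application of Lemma \ref{tbndtriv-l} to $g - h$, whose boundary value is constant by assumption. For the converse, I would write $g - h$ as a $\D^p$-limit of $f_n + c_n$ with $f_n \in \ell^p(\Gver)$ and $c_n \in \kk$. Lemma \ref{ttrivbnd-l} gives $\pi(f_n) = 0$, and since constants are fixed by $P$, one has $\pi(f_n + c_n) = c_n$ as a constant function. The key intermediate step is a $\D^p$-continuity estimate for $\pi - \Id$: applying Lemma \ref{t-dphold-l} with $\xi = P^{(n)}_x$, $\phi = \delta_x$ and the canonical transport pattern $\tau = -\sum_{i=0}^{n-1} P^{(i)}_{x,E}$, then letting $n \to \infty$, produces $|\pi(u)(x) - u(x)| \leq C_x \|\nabla u\|_{\ell^p(E)}$ with $C_x \leq |S|^{-1/p} \sum_{i \geq 0} \|P^{(i)}_x\|_{\ell^{p'}}$ finite (as in Lemma \ref{t-bornnormrw-l}). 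Combined with the pointwise convergence $f_n + c_n \to g - h$ (obtained from $\D^p$ convergence via $|u(x) - u(e_\Gamma)| \leq d(x,e_\Gamma)\|\nabla u\|_{\ell^\infty}$ and $\ell^p \subset \ell^\infty$ on counting measure), this forces $c_n$ to converge to a constant, which must equal $\pi(g-h)$.

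Claim (2) is immediate from $|P^{(n)} * g(x)| \leq \|g\|_{\ell^\infty(\Gver)}$. Claim (3) is a packaging of Lemma \ref{tquespan-l}: given $q > p$, I would pick $d > \tfrac{2pq}{q-p}$ (available by superpolynomial growth) so that $\tfrac{dp}{d-2p} < q$; the lemma then gives $\pi(g) - g \in \ell^q(\Gver)$, hence $\nabla(\pi(g) - g) \in \ell^q(E)$ by bounded valency, and adding $\nabla g \in \ell^p(E) \subset \ell^q(E)$ (using $\ell^p \subset \ell^q$ for $p \leq q$ on counting measure) yields $\nabla \pi(g) \in \ell^q(E)$, i.e., $\pi(g) \in \D^q(\Gamma)$.

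The part I expect to be the main obstacle is the converse in (1), specifically the extraction of a well-defined pointwise limit $\lim c_n$ from a $\D^p$-approximation that mixes an $\ell^p$-summable part and a drifting constant. This is precisely what the estimate $|\pi(u)(x) - u(x)| \leq C_x \|\nabla u\|_{\ell^p(E)}$ derived from Lemma \ref{t-dphold-l} is engineered to handle, and once this continuity statement is secured the rest is a formal assembly of Lemmas \ref{tbndtriv-l}, \ref{ttrivbnd-l} and \ref{tquespan-l} together with the observation that superpolynomial growth allows $d$ to be chosen arbitrarily large in $\IS_d$.
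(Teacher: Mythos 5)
Your proposal is correct and follows exactly the route the paper intends: the paper states this corollary as an immediate consequence of Corollaries \ref{tpoiss1-c}--\ref{tpoiss3-c}, with $\pi$ being the boundary-value map $g \mapsto \lim_n P^{(n)}*g$, using that superpolynomial growth gives $\IS_d$ for every $d$ so that $d$ may be taken arbitrarily large. Your assembly of Lemmas \ref{t-dphold-l}, \ref{ttrivbnd-l}, \ref{tbndtriv-l} and \ref{tquespan-l} (including the choice $d > 2pq/(q-p)$ for item (3)) is precisely the argument left implicit there.
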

Using Theorem \ref{tpoisson}, it also easy to find \emph{graphs} of polynomial growth which have $\ssl{\ell^pH}^1(\Gamma) \neq \{0\}$.
\begin{exa}
Take $\Gamma$ to be two copies of the usual Cayley graph of $\zz^d$ (where $d \geq 3$) and join them by an edge (say between their respective identity elements). So a vertex $x \in \Gamma$ can be written as $x = (z,i)$ with $i \in {1,2}$ and $z \in \zz^d$. Consider $f = \sum_{i \geq 0} P^{(i)}_e$ on $\zz^d$ (where $e \in \zz^d$ is the identity element). Define 
\[
g(z,i) = \left\{\begin{array}{ll}
f(z) & \text{if } i = 1,\\
K+2f(0) - f(z) & \text{if } i=2,
\end{array}\right.
\]
where $K = \nabla^* \nabla f(0)$. Then $g$ is harmonic and $\nabla g \in \ell^p(E)$ for $p<2d$ (since $f \in \ell^p(\zz^d)$ for $p<2d$). Since $g$ is non-constant, $\ssl{\ell^pH}^1(\Gamma) \neq \{0\}$ for any $p<d/2$. 

Actually, since $\Gamma$ has two ends, using Proposition \ref{tcohoml1-p} one has that $\ssl{\ell^1H}^1(\Gamma) \neq \{0\}$. Since a corollary of Theorem \ref{tpoisson} is that $\ssl{\ell^qH}^1(\Gamma) \inj \ssl{\ell^pH}^1(\Gamma)$ for $1 \leq q \leq  p< d/2$, one also sees that $\ell^pH^1(\Gamma) \neq \{0\}$ for $p \in [1,d/2[$.
\end{exa}
In fact, the previous example could have been done for any Cayley graph (as soon as the growth is faster than quadratic).

Another quotient which is sometimes studied is the $\ell^{p,q}$-cohomology. This is the quotient,
\[
\ell^{p,q} H^1(\Gamma):= \dfrac{\D^p(\Gamma) }{ \ell^q(\Gver) + \kk}. 
\]
Recall that in Lemma \ref{tquespan-l}, if $g \in \D^p(\Gamma)$ and $\Gamma$ has $\IS_d$, then $\tilde{g} - g \in \ell^q(X)$ for $q > \tfrac{dp}{d-2p}$. As a corollary
\begin{cor}
If $\Gamma$ is Liouville and has $\IS_d$, then $\ell^{p,q}H^1(\Gamma)$ is trivial for all $q > \tfrac{dp}{d-2p}$.
\end{cor}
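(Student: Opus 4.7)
The plan is to reduce to the case where $\tilde g$, the boundary value of $g$ for the simple random walk, is constant, and then to apply Lemma \ref{tquespan-l} verbatim. The hypothesis $q > \tfrac{dp}{d-2p}$ forces $d > 2p$, so both Corollary \ref{tpoiss2-c} and Lemma \ref{tquespan-l} apply; in particular $\ssl{\ell^pH}^1(\Gamma) = \{0\}$, and every $g \in \D^p(\Gamma)$ can be written as a $\D^p$-limit $g = \lim_n (h_n + c_n)$ with $h_n \in \ell^p(X)$ and $c_n \in \kk$.

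The key step is to show that $\tilde g$ is constant. By Lemma \ref{ttrivbnd-l}, $\tilde{h_n} \equiv 0$; since $P^{(m)}$ fixes constants and boundary values are additive, $\widetilde{h_n + c_n} \equiv c_n$. Continuity of the pointwise boundary-value map under $\D^p$-convergence then forces $c_n \to \tilde g(x)$ for every $x \in X$; because the left-hand side is independent of $x$, $\tilde g$ must be a constant $c := \lim_n c_n$. To justify this continuity, I would apply Lemma \ref{t-dphold-l} to $\xi = P^{(m)}_x$, $\phi = \delta_x$, with the canonical transport pattern $\tau = \sum_{i=0}^{m-1} P^{(i)}_{x,E}$; letting $m \to \infty$ yields the uniform estimate $|\tilde f(x) - f(x)| \leq C_x \|\nabla f\|_{\ell^p}$ valid for every $f \in \D^p$, where $C_x < \infty$ because $\sum_{i \geq 0} \|P^{(i)}_x\|_{\ell^{p'}}$ converges under $\IS_d$ with $p < d/2$. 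Applying this with $f = g - (h_n + c_n)$, and combining with the elementary fact that $\D^p$-convergence implies pointwise convergence (propagate along a finite path from $e_\Gamma$ to $x$, using $\nabla(h_n+c_n) \to \nabla g$ in $\ell^p$ and $(h_n+c_n)(e_\Gamma) \to g(e_\Gamma)$), gives $|\widetilde{h_n+c_n}(x) - \tilde g(x)| \to 0$, which is exactly the required continuity.

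Once $\tilde g \equiv c$ is known, Lemma \ref{tquespan-l} gives $g - c \in \ell^r(X)$ for every $r > \tfrac{dp}{d-2p}$; since $\ell^r \subseteq \ell^q$ whenever $r \leq q$ on a countable discrete space, one concludes $g \in \kk + \ell^q(X)$ for every $q > \tfrac{dp}{d-2p}$, i.e.\ $\ell^{p,q}H^1(\Gamma) = \{0\}$. The main obstacle is the continuity of boundary values under $\D^p$-convergence in the middle step; the rest is direct assembly of the preceding lemmas.
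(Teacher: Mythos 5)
Your proof is correct and follows the same route as the paper, which obtains the corollary directly from Lemma \ref{tquespan-l} ($\tilde g - g \in \ell^q(X)$) together with the constancy of the boundary value $\tilde g$ under the Liouville hypothesis. Your middle step --- establishing that $\tilde g$ is constant even for \emph{unbounded} $g$, by invoking Corollary \ref{tpoiss2-c} to write $g$ as a $\D^p$-limit of $h_n + c_n$ with $h_n \in \ell^p(\Gver)$ and then using the estimate $|\tilde f(x) - f(x)| \leq C_x \|\nabla f\|_{\ell^p(E)}$ from Lemma \ref{t-dphold-l} --- supplies a detail the paper leaves implicit, since the Liouville property by itself only forces constancy of \emph{bounded} harmonic limits.
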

There is also an analogue of Theorem \ref{tinclu-t}:
\begin{prop}
Assume $\Gamma'$ has $\IS_d$ and $\Gamma'$ is quasi-isometric to a spanning subgraph of $\Gamma$. Let $p< \infty$ and $q > \tfrac{dp}{d-2p}$. Then non-trivial $\ell^{p,q}$-classes of $\Gamma$ are given by functions $g \in \D^p(\Gamma)$ whose class is non-trivial in $\ell^{p,q}H^1(\Gamma')$.
\end{prop}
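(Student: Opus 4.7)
The statement should be read as asserting the injectivity of a natural map $\ell^{p,q}H^1(\Gamma)\to\ell^{p,q}H^1(\Gamma')$ induced by viewing a Dirichlet function of $\Gamma$ appropriately on $\Gamma'$; once this is established, non-trivial classes of $\Gamma$ are exactly those whose image is non-trivial. The argument goes through the spanning subgraph and then a quasi-isometry step, with Lemma \ref{tquespan-l} playing a central role.

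\textbf{Step 1 (reduction to a spanning subgraph).} Let $\Gamma''=(X,E'')$ be the spanning subgraph of $\Gamma=(X,E)$ which is quasi-isometric to $\Gamma'$. Since $E''\subset E$, we have $\|\nabla f\|_{\ell^p(E'')}\leq\|\nabla f\|_{\ell^p(E)}$, whence $\D^p(\Gamma)\subset \D^p(\Gamma'')$. Both $\ell^{p,q}H^1(\Gamma)$ and $\ell^{p,q}H^1(\Gamma'')$ are quotients of their respective Dirichlet spaces by the \emph{same} linear subspace $\ell^q(X)+\kk$. The identity $g\mapsto g$ therefore induces a well-defined map $\ell^{p,q}H^1(\Gamma)\to\ell^{p,q}H^1(\Gamma'')$, and this map is immediately injective: the condition ``$g=c+\phi$ with $c\in\kk$, $\phi\in\ell^q(X)$'' depends only on $g$ as a function on $X$, not on which edge set one uses; so $[g]_{\Gamma''}=0$ forces $[g]_{\Gamma}=0$.

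\textbf{Step 2 (quasi-isometry).} It remains to identify $\ell^{p,q}H^1(\Gamma'')$ with $\ell^{p,q}H^1(\Gamma')$. Quasi-isometry invariance of $\ell^pH^1$ (see \cite{El-qi}, \cite{Pan-qi}) gives an isomorphism $\ell^pH^1(\Gamma'')\cong\ell^pH^1(\Gamma')$ through an explicit pushforward/averaging construction. To upgrade to $\ell^{p,q}H^1$, note that $\Gamma''$ inherits $\IS_d$ from $\Gamma'$ (this being a quasi-isometry invariant of bounded-valency graphs). Given $g\in\D^p(\Gamma'')$, apply Lemma \ref{tquespan-l}: the boundary value $\tilde g$ of $g$ for the simple random walk on $\Gamma''$ satisfies $\tilde g-g\in\ell^r(X)$ for every $r>\tfrac{dp}{d-2p}$. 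In particular, working modulo the $\ell^q(X)+\kk$ ideal in the range $q>\tfrac{dp}{d-2p}$, one may always replace a representative by its boundary value, after which the standard qi-pushforward argument for harmonic/boundary-value representatives goes through and produces a well-defined inverse isomorphism between $\ell^{p,q}H^1(\Gamma'')$ and $\ell^{p,q}H^1(\Gamma')$.

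\textbf{Main obstacle.} Step 1 is essentially formal. The delicate point is Step 2: the usual quasi-isometric transport of a Dirichlet function introduces an error that is a priori only known to be itself Dirichlet, not in $\ell^q(X)$. The whole purpose of the hypothesis $\Gamma'\in\IS_d$ together with the threshold $q>\tfrac{dp}{d-2p}$ is that Lemma \ref{tquespan-l} then turns this error into an element of $\ell^r(X)$ for some $r\leq q$, so it becomes trivial in $\ell^{p,q}H^1$. Combining Steps 1 and 2 yields the injection $\ell^{p,q}H^1(\Gamma)\hookrightarrow\ell^{p,q}H^1(\Gamma')$, proving the proposition.
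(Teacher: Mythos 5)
The paper states this proposition without proof, so your argument must stand on its own. Your Step 1 is correct and is the substantive half of the statement: both quotients are taken modulo the same condition ``$g\in\ell^q(X)+\kk$'', which depends only on $g$ as a function on the common vertex set, so passing to the spanning subgraph $\Gamma''$ only enlarges the Dirichlet space and cannot change whether a class of $\D^p(\Gamma)$ is trivial. This is exactly the mechanism behind Theorem \ref{tinclu-t}, and your reading of the (admittedly loosely phrased) statement as the injectivity of the composite map is the right one.

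Step 2 is where the write-up is not yet a proof. The intermediate claim you need --- that a quasi-isometry between $\Gamma''$ and $\Gamma'$ induces an isomorphism on $\ell^{p,q}H^1$ --- is true, but the route you sketch does not establish it: harmonicity is not preserved by pulling back along a quasi-isometry, so ``the standard qi-pushforward argument for harmonic/boundary-value representatives'' is not something you can point to, and you never construct the inverse map nor check it is inverse modulo $\ell^q(X)+\kk$. Moreover Lemma \ref{tquespan-l} is not what is needed here; the correct (and simpler) argument is the usual one for $\ell^pH^1$. If $f\colon\Gamma'\to\Gamma''$ is a quasi-isometry with coarse inverse $\bar f$, then $g\mapsto g\circ f$ maps $\D^p(\Gamma'')$ to $\D^p(\Gamma')$ and, because quasi-isometries between bounded-valency graphs have uniformly finite fibres, maps $\ell^q(X'')+\kk$ into $\ell^q(X')+\kk$; and the error terms $g\circ f\circ\bar f-g$ and $h\circ\bar f\circ f-h$ are pointwise dominated by sums of $|\nabla g|$ (resp.\ $|\nabla h|$) over paths of uniformly bounded length, hence lie in $\ell^p(X)\subset\ell^q(X)$ since $q>\tfrac{dp}{d-2p}\geq p$. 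This makes the two pullbacks mutually inverse on $\ell^{p,q}H^1$ and finishes your Step 2 without any appeal to boundary values. (Where Lemma \ref{tquespan-l} genuinely enters is in making the proposition useful rather than in proving it: on $\Gamma'$, which has $\IS_d$, every class in $\ell^{p,q}H^1(\Gamma')$ with $q>\tfrac{dp}{d-2p}$ is represented by its harmonic boundary value, so the classes one must test for non-triviality are parametrized by harmonic functions --- the analogue of the role the spanning tree plays in Theorem \ref{tinclu-t}.)
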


\subsection{Normal subgroups}\label{ssarggab}

The aim here is to prove Theorem \ref{tvanquot-t}. A preliminary result on graphs ``stitched'' together is necessary.
\begin{lem}\label{tgamn-l}
Assume $d/2 > p \geq 1$. Let $\Gamma_i$ be a family of graphs all satisfying $\IS_d$ with the same constant. Let $\Gamma^\Pi$ be the disjoint union of the $\Gamma_i$. Fix some $k \in \zz_{\geq 2}$ and let $\Gamma'$ be obtained by adding $\leq k$ edges to each $\Gamma_i$ inside $\Gamma^\Pi$ so that the resulting graph is connected. Then $[g] \neq 0 \in \ssl{\ell^pH}^1(\Gamma')$ if and only at least one of the following holds:
\begin{itemize}
\item there exists $i$ such that $[g_{\mid \Gamma_i}] \neq 0 \in \ssl{\ell^pH}^1(\Gamma_i)$;
\item there exists $i \neq j$ such that $g_{\mid \Gamma_i}$ and $g_{\mid \Gamma_j}$ are constant at $\infty$ (in the sense of Lemma \ref{tbndvalintuit-l}) but not the same constant.
\end{itemize}
If the $\Gamma_i$ have $\IS_\omega$, the statement holds for unreduced cohomology.
\end{lem}
\begin{proof}
If all $\Gamma_i$ satisfy $\IS_d$ with the same constant then so do $\Gamma^\Pi$ and $\Gamma'$ (a consequence of the concavity of $t \mapsto t^{1-1/d}$). 

Without loss of generality, one may assume that $g$ is bounded (by the Holopainen \& Soardi truncation lemma, see Lemma \ref{tholoso-l}). If $g$ is non-trivial in cohomology, then, by Lemma \ref{tbndtriv-l}, the boundary value of $g$ in $\Gamma'$ is not constant. In particular, $g \not \to c$ as $|x| \to \infty$. 

On the other hand, one could see $g$ and $\widetilde{g}$ (its boundary value in $\Gamma'$) as functions on $\Gamma^\Pi$ or restrict them to the $\Gamma_i$. In each $\Gamma_i$, note that there are at most $k$ vertices which are connected to some other $\Gamma_j$ in $\Gamma'$. Since the $\Gamma_i$ are transient ($d >2$), the probability that a random walker starting at $x$ hits one of those $k$ vertices decreases to $0$ as $x$ tends to infinity. This means that the boundary value of $g_{|\Gamma_i}$ will (for large enough $x$) be arbitrarily close to $(\widetilde{g})_{|\Gamma_i}$.

Thus, a first possibility is that the boundary of $g$ restricted to one of the $\Gamma_i$ will be non-trivial. If this is not the case, then $g_{|\Gamma_i} \to c_i$ as $|x| \to \infty$. Since the boundary value of $g$ in $\Gamma'$ is non-trivial, the constants $c_i$ must be different. 

Similarly, if all $[g_{\mid \Gamma_i}]=0$, then, by Lemma \ref{ttrivbnd-l}, all boundary values of the copies of $\Gamma_i$ are constant. By Lemma \ref{tbndtriv-l}, $g$ is trivial exactly when these constant are the same.

Lastly, if the $\Gamma_i$ have $\IS_\omega$, boundary values of $g$ are in the same unreduced class as $g$ (see Remark \ref{rpans}), so unreduced cohomology may be considered.
\end{proof}

\begin{proof}[Proof of Theorem \ref{tvanquot-t}]
Let $N \lhd G$ be an infinite normal subgroup of $G$ (a finitely generated group). If $N$ is finitely generated as a group, it is possible to pick a Cayley graph $\Gamma = \cay(G,S)$ so that $S$ generates $N$ and $G$. Denote by $\Gamma_N = \cay(N,S\cap N)$. Then $\Gamma$ admits $[G:N]$ (in the present case, infinitely many) copies of $\Gamma_N$ as a subgraph. 

Assume $N$ has polynomial growth at least $d$ (where $d \geq 3$). Let $p \in [1,d/2[$. Let $\Gamma$ and $\Gamma_N$ be as above. Pick the $\Gamma_i$ to be the various copies of $\Gamma_N$ in $\Gamma$: $\Gamma_i$ is the graph restricted to the vertices $N \gamma_i$ where $\gamma_i$ are representatives for the cosets of $N$. Since they are all isomorphic (edges are defined on the left), they have $\IS_d$ with the same constant. Consider two graphs: the graph of the disjoint copies of $\Gamma_N$, say $\Gamma^\Pi$ as above, and this same graph with edges added between the different copies so that the result is connected, call it $\Gamma'$ as above. Note that only finitely many edges need to be added to each copy of $\Gamma_N$ to construct $\Gamma'$ (in a way that $\Gamma'$ is connected) as $G$ is finitely generated and $N$ is normal. 

Assume there is a non-trivial element $g$ in $\D^p(\Gamma)$. By Corollary \ref{tvanspan-c}, it is also non-trivial in $\ssl{\ell^pH}^1(\Gamma')$. By Lemma \ref{tgamn-l}, there are two possibilities. 

First possibility: one of its restriction to some $N \gamma_0$ possesses a non-constant boundary value. But this would imply that $\ssl{\ell^pH}^1(\Gamma_N)$ is non-trivial; this contradicts the hypothesis. 

Second possibility: the restrictions all have constant boundary values, but not the same constant. This would imply that, outside large enough finite sets, these restrictions are arbitrarily close to their constant value. But the gradient of $g$ on the initial graph $\Gamma$ would then not be in $\ell^p(E)$ (in fact, not even in $c_0(E)$) but in $\ell^\infty(E)$. Indeed, since $N$ is normal, the distance from $x \in \Gamma_i$ to $\Gamma_j$ is always the same. This contradicts $g \in \D^p(\Gamma)$.

As before, in the case when $N$ is non-amenable, boundary values belong to the same unreduced class (see Remark \ref{rpans}), so the result will extend to unreduced cohomology in this case.
\end{proof}

\appendix

\section{Ends and degree one reduced $\ell^1$-cohomology}\label{sapp}

\setcounter{teo}{0}
\renewcommand{\theteo}{\thesection.\arabic{teo}}
\renewcommand{\theprop}{\thesection.\arabic{teo}}
\renewcommand{\thecor}{\thesection.\arabic{teo}}

This section is devoted to the reduced degree one $\ell^1$-cohomology. This result is known (even well-known, if one adheres to the rule that at least 3 persons were aware of it): P.~Pansu was aware of this, the main argument of non-vanishing is present in Martin \& Valette \cite[Example 3 in \S{}4]{MV} who mention hearing it from M.~Bourdon. It is included here for the sake of completion.

The ends of a graph are the infinite components of a group which cannot be separated by a finite (\ie compact) set. More precisely, an end $\xi$ is a function from finite sets to infinite connected components of their complement so that $\xi(F) \cap \xi(F') \neq \vide$ (for any $F$ and $F'$). It may also be seen as an equivalence class of (infinite) rays who eventually leave any finite set. Two rays $r$ and $r'$ are equivalent if, for any finite set $F$, the infinite part of $r$ and $r'$ lie in the same (infinite) connected component. 

Thanks to Stallings' theorem, groups with infinitely many ends contain an (non-trivial) amalgamated product or a (non-trivial) HNN extension. Being without ends is equivalent to being finite, and amenable groups may not have infinitely many ends. This may be seen using Stallings' theorem, see also Moon \& Valette \cite{MooV} for a direct proof. An intuitive idea is that a Cayley graph with infinitely many ends contains has a quasi-isometry to a tree $T$ with strong isoperimetric constant, and hence cannot be amenable. Groups with two ends admit $\zz$ as a finite index subgroup. These groups are peculiar, as they have non-trivial reduced $\ell^1$-cohomology in degree $1$, even if their reduced $\ell^p$-cohomology (in all degrees) vanishes for $1<p<\infty$.
 
So outside virtually-$\zz$ groups, all infinite amenable groups have one end. 

\begin{prop}\label{tcohoml1-p}
Let $\Gamma$ be a connected graph, then $\ssl{\ell^1 H}^1(\Gamma) =0$ if and only if the number of ends of $\Gamma$ is $\leq 1$. More precisely, let $\EN = \kk^{\mathrm{ends}(\Gamma)}/ \kk$ be the vector space of functions on ends modulo constants. There is a boundary value map $\beta: \D^1(\Gamma) \to \EN$  such that $\beta(g) = \beta(h) \iff [g] = [h] \in \ssl{\ell^1H}^1(\Gamma)$. 
\end{prop}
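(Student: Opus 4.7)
The plan is to construct a boundary-value map $\tilde\beta : \D^1(\Gamma) \to \ell^\infty(\mathrm{ends}(\Gamma))$, pass to the quotient $\beta : \D^1(\Gamma) \to \EN$, show it is $\D^1$-continuous and vanishes on $\ell^1(\Gver)+\kk$ so it descends to a map out of $\ssl{\ell^1 H}^1(\Gamma)$, prove the descended map is injective, and finally read off the dichotomy with $|\mathrm{ends}(\Gamma)|\leq 1$.

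First I would set $\tilde\beta(g)(\xi) := \lim_n g(x_n)$ for any ray $(x_n)$ representing $\xi$. The key estimate is: for any finite $F \subset \Gver$ and any $x,y$ in the infinite connected component $\xi(F)$ of $\Gver \setminus F$, joining them by a path inside $\xi(F)$ and telescoping gives $|g(x)-g(y)| \leq \|\nabla g\|_{\ell^1(E(\xi(F)))}$, where $E(\xi(F))$ denotes the edges of $\Gamma$ with both endpoints in $\xi(F)$; since $\nabla g \in \ell^1(E)$ and $E(\xi(F))$ shrinks as $F$ grows, this quantity tends to $0$. That simultaneously gives the Cauchy property of $g$ along every ray in $\xi$ and the independence of the limit from the chosen ray. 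The bound $|\tilde\beta(g)(\xi)| \leq |g(e_\Gamma)| + \|\nabla g\|_{\ell^1(E)} = \|g\|_{\D^1(\Gamma)}$ makes $\tilde\beta$, hence $\beta$, continuous. Since $\ell^1(\Gver) \subset c_0(\Gver)$ all ray limits vanish on $\ell^1(\Gver)$, and constants go to constants, so $\beta$ is zero on $\ell^1(\Gver)+\kk$ and, by continuity, on $\srl{\ell^1(\Gver)+\kk}^{\D^1}$; hence $\beta$ descends to $\ssl{\ell^1 H}^1(\Gamma)$.

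For the hard half, suppose $\beta(g) = \beta(h)$ and set $f := g - h$, so $\tilde\beta(f)$ is a constant $c \in \kk$; subtracting $c$ we may assume $\tilde\beta(f) \equiv 0$. The crucial step is to argue that $f \in c_0(\Gver)$: otherwise the set $A := \{|f| \geq \eps\}$ is infinite for some $\eps > 0$, and since $\Gamma$ has bounded valency the Freudenthal end compactification $\Gver \cup \mathrm{ends}(\Gamma)$ is compact, so some subsequence in $A$ converges to an end $\xi$; but by the path estimate above, for $F$ large enough every $x \in \xi(F)$ satisfies $|f(x) - \tilde\beta(f)(\xi)| < \eps/2$, giving $|f|<\eps/2$ on $\xi(F)$, contradicting that $\xi(F)$ contains points of $A$. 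With $f \in c_0(\Gver) \cap \D^1(\Gamma)$ in hand, use the truncation $\theta_t(s) := s$ if $|s|\leq t$ and $ts/|s|$ otherwise: $\theta_t$ is $1$-Lipschitz so $|\nabla \theta_t(f)| \leq |\nabla f| \in \ell^1(E)$; pointwise $\theta_t(f) \to 0$ as $t \to 0$, and dominated convergence on $E$ yields $\theta_t(f) \to 0$ in the $\D^1$-norm, while $f - \theta_t(f)$ is supported on the finite set $\{|f| > t\}$ and hence lies in $\ell^1(\Gver)$. This exhibits $f \in \srl{\ell^1(\Gver)}^{\D^1}$, so $[g] = [h]$ in $\ssl{\ell^1 H}^1(\Gamma)$.

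The dichotomy now follows. If $|\mathrm{ends}(\Gamma)| \leq 1$ then $\EN = 0$, and injectivity of $\beta$ on cohomology gives $\ssl{\ell^1 H}^1(\Gamma) = 0$. If $|\mathrm{ends}(\Gamma)| \geq 2$, choose a finite $F \subset \Gver$ whose complement has at least two infinite components $V_1, V_2, \ldots$ and take $g := \un_{V_1}$; by bounded valency and finiteness of $F$, $\nabla g$ is finitely supported, so $g \in \D^1(\Gamma)$, and $\tilde\beta(g)$ takes value $1$ on the ends in $V_1$ and $0$ on the ends in other $V_i$, so $\beta(g) \neq 0$ and $[g]\neq 0$. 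The step I expect to be the main obstacle is the implication $\tilde\beta(f)\equiv 0 \Rightarrow f \in c_0(\Gver)$: this is where bounded valency and compactness of the end compactification genuinely enter, and one must carefully use the path estimate to transfer the abstract limit at an end into uniform control of $f$ on entire neighborhoods $\xi(F)$.
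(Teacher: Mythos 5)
Your proof is correct and follows essentially the same route as the paper's: boundary values at ends via telescoping the $\ell^1$ gradient along paths, continuity and vanishing on $\srl{\ell^1(\Gver)+\kk}^{\D^1}$, truncation plus dominated convergence for injectivity, and indicators of complementary components for non-vanishing. You actually justify more carefully two points the paper glosses over, namely that $\beta(f)=0$ forces $f\in c_0(\Gver)$ and that the truncations converge in $\D^1$-norm.
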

Note that the isomorphism is in the category of vector spaces, not of normed vector spaces. In a few cases, the norm on $\EN$ resembles the norm of the quotient $\ell^\infty(|\mathrm{ends}|)/ \kk$. The proof is slightly different than the argument from M.~Bourdon found in Martin \& Valette \cite[Example 3 in \S{}4]{MV}.
\begin{proof}
Note that $\D^1(\Gamma) \subset \ell^\infty(X)$: if $g \in \D^1(\Gamma)$, then, for $P$ a path from $x$ to $y$,
\[
|g(y)| = |g(x) + \sum_{e \in P} g(e) | \leq |g(x)| + \|\nabla g\|_{\ell^1(E)}.
\]
In fact, $\|g \|_{\ell^\infty(X)} \leq \|g\|_{\D^1(\Gamma)} + \inff{x \in X} |g(x)| $. Since functions in $\ell^1$ decrease at $\infty$, if one removes a large enough finite set, the function $g$ on the resulting graph is almost constant. In particular, it is possible to define a value of $g$ on each end: let $B_n$ be the ball of radius $n$ at some fixed vertex (root) $o$, then
\[
\beta g(\xi) :=  \limm{n \to \infty} g(x_n) \text{ where } x_n \in \xi(B_n)
\]
Alternatively, if $r: \zz_{\geq 0} \to X$ is a ray representing the end $\xi$, then the value at $\xi$ can also be defined as $\limm{n \to \infty} g\big(r(n)\big)$. It is fairly straightforward to check these limit do not depend on the choice (of $x_n$ and $o$ or of the ray $r$).

Fix an end $\xi_0$. Then, define $\beta: \D^1(\Gamma) \to \EN$ by changing with a constant the value of $g$ to be $0$ at $\xi_0$ and then looking at the values at the ends. This map is continuous and trivial on $\ell^1(X) + \kk$ (since functions in $\ell^1(X)$ have trivial value at the ends). By continuity, $\srl{\ell^1(X) + \kk}^{\D^1(\Gamma)} \subset \ker \beta$. 

Assume, $\beta(f) =0$, this means that, $\forall \eps>0, \exists X_\eps \subset X$ a finite set such that $|f(X_\eps ^\comp)|<\eps$. The proof then follows \emph{verbatim}, as in Lemma \ref{tbndvalintuit-l}.
\end{proof}
Amenable groups with two ends step strangely out of the crowd: although their $\ell^p$-cohomology is always trivial if $p>1$, it is non-trivial for $p=1$ (actually isomorphic to the base field). An amusing corollary is
\begin{cor}
Let $G$ be a finitely generated group. $G$ has infinitely many ends if and only if for some (and hence all) Cayley graph $\Gamma$, $\forall p \in [1,\infty[, \ssl{\ell^p H}^1(\Gamma) \neq 0$. $G$ has two ends if and only if for some (and hence all) Cayley graph $\Gamma$, $\forall p \in ]1,\infty[, \ssl{\ell^p H}^1(\Gamma) = 0$ but $\ssl{\ell^1 H}^1(\Gamma) = \kk$.
\end{cor}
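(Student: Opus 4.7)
The plan is to combine Proposition \ref{tcohoml1-p} (which pins down $\ssl{\ell^1 H}^1$ via ends) with Corollary \ref{tpoiss1-c} (the injection between reduced cohomologies under isoperimetric profiles) and the known vanishing theorems for amenable/polynomial-growth groups.

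First, I would recall that a finitely generated group has $0$, $1$, $2$, or infinitely many ends, and that by Stallings' theorem (cited in the introductory paragraphs of the appendix) a group with infinitely many ends must be non-amenable, hence any of its Cayley graphs satisfies $\IS_\omega$, hence $\IS_d$ for every $d \in \zz_{\geq 1}$. Moreover, a group with exactly two ends is virtually $\zz$, which has polynomial growth and is amenable.

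For the first equivalence, I would treat $(\Rightarrow)$ as follows. If $G$ has infinitely many ends, then Proposition \ref{tcohoml1-p} yields $\ssl{\ell^1 H}^1(\Gamma) \neq 0$ (since $\EN$ has dimension $\geq 1$ and $\beta$ is injective on reduced classes). Then, given any $p \in [1, \infty[$, I pick $d > 2p$; since $\Gamma$ has $\IS_d$, Corollary \ref{tpoiss1-c} gives an injection $\ssl{\ell^1 H}^1(\Gamma) \hookrightarrow \ssl{\ell^p H}^1(\Gamma)$, so the latter is non-trivial. For the converse $(\Leftarrow)$, if every $\ssl{\ell^p H}^1(\Gamma)$ is non-zero, then taking $p=1$ and Proposition \ref{tcohoml1-p} shows $\Gamma$ has at least $2$ ends. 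To rule out exactly $2$ ends, I use that a $2$-ended group is virtually $\zz$ and therefore has $\ssl{\ell^p H}^1(\Gamma) = 0$ for every $p \in {]1,\infty[}$; this is the assertion I will invoke from the literature on polynomial-growth groups already cited in \S\ref{ss-discu} (in particular \cite[Theorem 6.4]{Kap} or \tcentrinf, since virtually-$\zz$ groups have infinite centre, hence infinitely many finite conjugacy classes). This would contradict the hypothesis, so $G$ must have infinitely many ends.

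For the second equivalence, $(\Rightarrow)$: if $G$ has two ends, then Proposition \ref{tcohoml1-p} embeds $\ssl{\ell^1 H}^1(\Gamma)$ into $\EN = \kk^2/\kk \cong \kk$, and non-vanishing (since ends $\geq 2$) forces $\ssl{\ell^1 H}^1(\Gamma) = \kk$; the vanishing of $\ssl{\ell^p H}^1(\Gamma)$ for $p \in {]1,\infty[}$ is again the cited result for virtually-$\zz$ (polynomial growth) groups. Conversely, $(\Leftarrow)$ is immediate: $\ssl{\ell^1 H}^1(\Gamma) = \kk \neq 0$ forces at least two ends via Proposition \ref{tcohoml1-p}, and the vanishing of $\ssl{\ell^p H}^1(\Gamma)$ for some $p \in {]1,\infty[}$, combined with the first equivalence already proved, rules out infinitely many ends. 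Hence exactly two.

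The only genuinely non-trivial step is the $p \in {]1,\infty[}$ vanishing for virtually-$\zz$: the isoperimetric dimension $d=1$ is too small for Corollary \ref{tpoiss1-c} or the Liouville corollary to apply directly, so I must invoke one of the alternative proofs recalled in \S\ref{ss-discu} (infinite centre, polycyclicity, or a direct $\zz$ computation via truncation). Everything else is a formal chase through the injection and the ends decomposition.
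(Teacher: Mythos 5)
Your proposal is correct, and it coincides with the paper's proof in two of its three ingredients: Proposition \ref{tcohoml1-p} for everything concerning $\ell^1$ and the count of ends, and the polynomial-growth vanishing theorems (\cite{Kap}, \tcentrinf, or \cite{Tes}) for the $p\in{]1,\infty[}$ vanishing on two-ended, hence virtually-$\zz$, groups. The one place you take a genuinely different route is the non-vanishing for all $p\in[1,\infty[$ when $G$ has infinitely many ends: the paper invokes Theorem \ref{tinclu-t} (such a group is non-amenable, so its Cayley graph has $\IS_\omega$, reduced and unreduced cohomology coincide, and one then verifies non-triviality of the unreduced class of a function separating two ends), whereas you propagate the non-vanishing upward from $p=1$ via the injection $\ssl{\ell^qH}^1(\Gamma)\hookrightarrow\ssl{\ell^pH}^1(\Gamma)$ of Corollary \ref{tpoiss1-c}, using that $\IS_\omega$ implies $\IS_d$ for every $d$, so $d>2p$ can always be arranged. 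Your route is legitimate --- it is exactly the mechanism the paper itself uses in the two-copies-of-$\zz^d$ example of \S{}\ref{ss-superpoly} --- and avoids the Benjamini--Schramm spanning tree entirely; the paper's route yields the slightly stronger conclusion in unreduced cohomology. Your remark that injectivity of $\beta$ into $\EN\cong\kk$ together with non-vanishing already forces $\ssl{\ell^1H}^1(\Gamma)=\kk$ in the two-ended case is a clean way to sidestep any discussion of surjectivity of the boundary-value map.
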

\begin{proof}
Use Proposition \ref{tcohoml1-p} for reduced $\ell^1$-cohomology, use any vanishing theorem on groups of polynomial growth (Kappos \cite{Kap}, \tcentrinf, or Tessera \cite{Tes}) to get the remaining values of $p$ for groups with two ends, and finally use Theorem \ref{tinclu-t} on groups with infinitely many ends (which are in particular non-amenable).
\end{proof}
It is worth noting that Bekka \& Valette showed in \cite[Lemma 2, p.316]{BekVal} that (for $G$ discrete) the cohomology $H^1(G, \mathbb{C}G)$ is also isomorphic (as a vector space) to $\EN$. Furthermore, by \cite[Proposition 1]{BekVal}, there is an embedding $H^1(G, \mathbb{C}G) \inj \ell^1H^1(G)$. A careful reading would probably reveal this remains injective in reduced cohomology (the only case to check is when $G$ has two ends).

For completion, let us also mention an other extremal case:
\begin{prop}\label{tcohomli-p}
Let $\Gamma$ be an infinite graph, then $\ssl{\ell^\infty H}^1(\Gamma) \neq \{0\}$.
\end{prop}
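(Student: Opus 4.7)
The plan is to exhibit a concrete element of $\D^\infty(\Gamma)$ whose class in $\ssl{\ell^\infty H}^1(\Gamma)$ is non-zero. Since $\kk \subset \ell^\infty(X)$, one has $\ell^\infty(X) + \kk = \ell^\infty(X)$, so it suffices to produce $g \in \D^\infty(\Gamma) \setminus \srl{\ell^\infty(X)}^{\D^\infty}$. The natural candidate is the distance function to the root, $g(x) := d(x, e_\Gamma)$: it is $1$-Lipschitz, so $\|\nabla g\|_{\ell^\infty(E)} \leq 1$, and $g(e_\Gamma) = 0$; hence $g \in \D^\infty(\Gamma)$.

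Suppose, for contradiction, that $h_n \to g$ in $\D^\infty$ with each $h_n \in \ell^\infty(X)$. Convergence in $\D^\infty$ forces $\nabla h_n \to \nabla g$ uniformly on $E$ together with $h_n(e_\Gamma) \to 0$. The key geometric point is that along any geodesic segment $e_\Gamma = x_0, x_1, \ldots, x_N$ with $d(x_k, e_\Gamma) = k$, the gradient $\nabla g$ takes value $+1$ on each consecutive edge. Fixing $n$ large enough so that $\|\nabla g - \nabla h_n\|_{\ell^\infty(E)} < 1/2$ and $|h_n(e_\Gamma)| < 1$, telescoping along such a segment yields
\[
h_n(x_N) = h_n(e_\Gamma) + \sum_{k=0}^{N-1} \nabla h_n(x_k, x_{k+1}) > -1 + \tfrac{N}{2}.
\]
Geodesic segments of arbitrary length $N$ exist because $\Gamma$ is infinite of bounded valency (only finitely many vertices lie at each distance from $e_\Gamma$, so $d(\cdot, e_\Gamma)$ is unbounded). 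Letting $N \to \infty$ contradicts $h_n \in \ell^\infty(X)$, and therefore $[g] \neq 0$.

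There is no real obstacle: the argument is a straightforward telescoping, the only input from the hypothesis "$\Gamma$ infinite" being the existence of vertices arbitrarily far from $e_\Gamma$. The moral is that any Lipschitz function exhibiting positive "linear growth" in some direction gives a non-trivial class in $\ssl{\ell^\infty H}^1$, which is why the case $p = \infty$ had to be excluded from Question \ref{laquestion}.
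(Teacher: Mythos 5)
Your proof is correct and is essentially the paper's own argument: both take $g(x)=d(x,e_\Gamma)$ and observe that any $h$ with $\|\nabla g-\nabla h\|_{\ell^\infty(E)}<1/2$ must grow linearly along geodesics from the root, hence cannot be bounded, so $g$ stays at $\D^\infty$-distance at least $1/2$ from $\ell^\infty(X)+\kk$. You merely spell out the telescoping estimate that the paper leaves implicit (and, like the paper, you implicitly use the standing hypotheses that $\Gamma$ is connected and of bounded valency to produce vertices at arbitrary distance from the root).
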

\begin{proof}
$\ell^\infty H^1(\Gamma)$ is the quotient of Lipschitz functions by bounded Lipschitz functions so is manifestly never trivial. Further, if one takes $g$ to be the distance to a fixed vertex $r$, \ie $g(\gamma) = d(\gamma, r)$, then $[g]$ is not trivial in the reduced cohomology. Indeed, a function with $\| g- h\| < 1/2$ has positive gradient on all edges between the spheres around $r$. As a consequence $h$ may not be bounded, and no element of $\ell^\infty(X)$ may be close to $g$.
\end{proof}
It seems quite plausible that $\ssl{c_0 H}^1(\Gamma) =\{0\}$ for any graph.  Indeed, let $\Omega_\eps$ a big ball such that $\|f\|_{\D^\infty(\Gamma \setminus \Omega_\eps)} \leq \eps$. Start constructing $g_\eps$ by making it equal to $f$ on $\Omega_\eps$. If these functions can be extended so that its gradient is always $< \eps$ outside $\Omega_\eps$ and that it is finitely supported, then $\|f - g_\eps \|_{\D^\infty} < 2\eps$. Hence the class of $f$ would be trivial. It might be useful to use Lemma \ref{tholoso-l} (\ie $f$ may be assumed bounded) to conclude.

\end{document}